\DeclareMathOperator{\sech}{sech}
\DeclareMathOperator{\csch}{csch}
\newtheorem{theorem}{Theorem}[section]
\newtheorem{lemma}[theorem]{Lemma}
\renewcommand{\O}{{\mathcal O}}
\begin{document}
\parskip.9ex

\title[Multiple-Relaxation Runge Kutta Methods for Conservative Systems]
{Multiple-Relaxation Runge Kutta Methods for Conservative Dynamical Systems}

\author[A. Biswas]{Abhijit Biswas}
\address[Abhijit Biswas]
{Computer, Electrical, and Mathematical Sciences \& Engineering Division \\ 
King Abdullah University of Science and Technology \\ Thuwal 23955 \\ Saudi Arabia} 
\email{abhijit.biswas@kaust.edu.sa}
\urladdr{https://math.temple.edu/\~{}tug14809}

\author[D. I. Ketcheson]{David I. Ketcheson}\thanks{Computer, Electrical, and Mathematical Sciences \& Engineering Division,
King Abdullah University of Science and Technology, Thuwal 23955, Saudi Arabia.
This work was supported by funding from the King Abdullah University of Science and Technology.}
\email{david.ketcheson@kaust.edu.sa}
\urladdr{https://www.davidketcheson.info}

\subjclass[2000]{65L04, 65L20, 65M06, 65M12, 65M22.}
\keywords{Runge-Kutta methods, multiple-relaxation RK methods, conservative systems, invariants-preserving numerical methods.}

\begin{abstract}
    We generalize the idea of relaxation time stepping methods in order to preserve multiple nonlinear conserved quantities of a dynamical system by projecting along directions defined by multiple time stepping algorithms.
    Similar to the directional projection method of Calvo et. al., we use embedded Runge-Kutta methods to facilitate this in a computationally efficient manner.
    Proof of the accuracy of the modified RK methods and the existence of valid relaxation parameters are given, under some restrictions. 
    Among other examples, we apply this technique to Implicit-Explicit Runge-Kutta time integration for the Korteweg-de Vries equation and investigate the feasibility and effect of conserving multiple invariants for multi-soliton solutions.
\end{abstract}

\maketitle
\renewcommand{\O}{{\mathcal O}}
\newcommand{\dbtilde}[1]{\accentset{\approx}{#1}}
\newcommand{\vardbtilde}[1]{\tilde{\raisebox{0pt}[0.85\height]{$\tilde{#1}$}}}

\newcommand{\Err}{\mathcal{I}(\zeta)}  
\newcommand{\myremarkend}{$\spadesuit$} 
\newcommand{\lbmap}{\left[} 
\newcommand{\rbmap}{\right]} 
\newcommand{\WSOset}{\mathbb{W}_q}
\newcommand{\OCset}{\mathbb{V}_{p}}
\newcommand{\numc}{n_c}
\newcommand{\minPolc}{P_{\rm C}}
\newcommand{\qso}{\tilde{q}}
\newcommand{\RzCoeff}{\beta}
\newcommand{\Sredr}{\tilde{r}}
\newcommand{\BigO}{{\mathcal{O}}}
\newcommand{\Dt}{\Delta t}
\newcommand{\vecpower}[2]{\vec{#1}^{\,#2}}
\newcommand{\kgen}{m}
\newcommand{\basismat}{L}
\newcommand{\Umat}{U}
\newcommand{\Vmat}{V}
\newcommand{\Wmat}{W}
\newcommand{\bsmall}{\hat{\vec{b}}}
\newcommand{\Ksmall}{\hatK}
\newcommand{\Real}{\mathbb{R}}
\newcommand{\bk}{\vec{b}^{\,k}}
\newcommand{\kibitz}[2]{\textcolor{#1}{#2}}
\newcommand{\abhi}[1] {\kibitz{red}{[AB says: #1]}}
\newcommand{\david}[1] {\kibitz{blue}{[DK says: #1]}}

\section{Introduction}
    The development of structure-preserving time integrators, which preserve qualitative properties of initial value problems, has been a major focus of numerical analysis for the last few decades 
    \cite{haier2006geometric}. Consider an ordinary differential equation (ODE) initial-value problem
    \begin{equation}\label{IVP_1}
             \dot{u}(t)  = f\left(t,u(t)\right), \  u(0) = u_{0}, \ u(t)\in \Real^{m} \;,
    \end{equation}
    where $ \ f : \mathcal{D} \subset \Real \times \Real^m \to \Real^m$ is a sufficiently smooth function. We say that the problem \eqref{IVP_1} has $\ell$ conserved quantities defined in terms of a $\mathcal{C}^{1}(\mathcal{\bar{D}})$ function $ \ G: \mathcal{\bar{D}} \subset \Real^{m} \to \Real^{\ell}$ if
    \begin{align*}
        \frac{\textrm{d}}{\textrm{dt}}G(u(t)) = \nabla G(u(t))^{T}f(t,u(t)) = 0 \;.
    \end{align*}
    Time-dependent differential equations with multiple conserved quantities (mass, energy, etc.) appear in many applications. Examples include special classes of ODEs (for example, the Lotka-Volterra system), Hamiltonian systems,
    and many partial differential equations, including the Korteweg-de Vries (KdV) equation, nonlinear Schr\"{o}dinger  equation, etc. Since in general numerical integrators do not conserve the invariants of these systems, in recent years the preservation of invariants has grown in significance as a criterion of a numerical scheme's effectiveness \cite{li1995finite}. Failure to maintain the invariants sometimes leads to non-physical numerical solutions \cite{gear1992invariants} or spurious blow-up of the numerical solutions \cite{arakawa1997computational} when numerically integrating the system. In many cases, conservative schemes are also proved to have better error growth behavior over time than nonconservative methods \cite{de1997accuracy,duran2000numerical}. In such cases, conservative methods may be the only approach that gives acceptable numerical solutions for long-time simulations.  Further studies illustrating the superiority of conservative approaches over generic methods in this regard can be found in \cite{calvo1993development, cano1997error,calvo2011error,ranocha2021rate}.
    
    Runge-Kutta methods are among the most widely used time integration techniques, and it is natural to consider these methods while studying invariant preserving numerical integrators. All Runge-Kutta methods preserve linear invariants.  Only symplectic RK methods preserve quadratic invariants \cite{cooper1987stability}, and such methods must be fully implicit.  It is also well-known \cite{haier2006geometric} that no Runge-Kutta method can preserve all polynomial invariants of degree $n \geq 3$. This restriction motivates the development of new techniques that can preserve general conserved quantities.
    
    Multiple approaches have been introduced to overcome the shortcomings mentioned above in the existing numerical integrators, including for example, discrete gradients and orthogonal projection.  Here we review the ideas leading to relaxation methods. 
    Dekker and Verwer introduced a modification of the classical 4th-order explicit RK method in order to preserve a quadratic invariant \cite{dekker1984stability}.  This modification can be viewed as projection onto the conservative manifold along an oblique direction.  Similar ideas were further developed and generalized by Calvo and coauthors to develop the directional projection technique, allowing the use of other explicit RK methods and the preservation of multiple, not necessarily quadratic, invariants, by using embedded
    explicit Runge-Kutta methods\cite{calvo2006preservation}.
    A major advantage of these oblique projection techniques when compared with orthogonal projection is that oblique projection can be designed to maintain preservation of linear invariants (which are naturally preserved by RK methods) while also preserving nonlinear invariants.
    Recently, a further modification of these ideas, known as  relaxation \cite{ketcheson2019relaxation,ranocha2020relaxation_1} has been proposed in order to impose dissipation or conservation of general nonlinear functionals while retaining the full accuracy of the original method. This approach was successfully applied to Hamiltonian systems \cite{ranocha2020relaxation} and was further generalized in the context of multistep methods \cite{ranocha2020general}. 
    
    So far, relaxation methods have been used to conserve a single invariant of a system. Preserving a single generic invariant requires solving a nonlinear equation for one relaxation parameter at each time step of the method. In this work we generalize the idea of relaxation in order to preserve multiple invariants, similar to what was done for the incremental direction technique in \cite{calvo2006preservation}.  In order to preserve multiple invariants we require multiple linearly-independent search directions, which could be obtained in principle from any set of distinct time discretizations.  Following the natural and efficient approach used in \cite{calvo2006preservation}, we employ sets of embedded RK methods so that no additional RHS evaluations are needed.
    The resulting approach is similar to that of \cite{calvo2006preservation}, but requires one fewer embedded methods for the same number of conserved quantities.
    Herein we sometimes refer to relaxation methods and the directional projection method of \cite{calvo2006preservation} as \emph{oblique projection methods}.

    In Sections \ref{sec:RRK}-\ref{sec:MRRK}, we develop multiple relaxation methods.  In Section \ref{sec:existence} we prove the existence of solutions to the equations that determine the relaxation parameters, and show that the resulting methods retain the original order of accuracy.  The material in these sections builds closely on previous work on both relaxation and directional projection methods.  In Section \ref{sec:numerical} we verify the effectiveness of the methods on a few ODE examples.  In Section \ref{sec:KdV},
    we take the application of oblique projection methods further than what has been done before, by
    applying them to PDE examples, in combination with IMEX time stepping,
    We study the impact of multiple relaxation on the long-time accuracy of multi-soliton solutions
     and investigate the feasibility of applying relaxation in order to recover conservation laws of the PDE that have been lost in semi-discretization.
    These more challenging applications also shed new light on some of the practical difficulties that may arise in the solution of the algebraic equations used to impose conservation.


\section{Runge-Kutta Methods and Relaxation}\label{sec:RRK}
	An $\mathrm{s}$-stage RK method can be represented via its \emph{Butcher tableau}:
    \begin{equation}\label{B_table_RK}
        \renewcommand\arraystretch{1.2}
        \begin{array}
            {c|c}
            \vec{c} & A\\
            \hline
            & \vec{b}^{\,T}
        \end{array}\;, 
    \end{equation}
    where the matrix $A = (a_{ij})\in \Real^{s \times s}$ and vector $\vec{b} \in \Real^{s}$.  We assume 
    \begin{align} \label{c-assumption}
    \vec{c} = A\vec{e},
    \end{align}
    where $\vec{e}$ is the vector of ones in $\Real^{s}$.
    
	For an initial value problem
    \begin{equation}\label{IVP_2}
             \dot{u}(t)  = f\left(t,u(t)\right), \  u(0) = u_{0}; \ u\in \Real^{m}, \ f : \Real\times\Real^m \to \Real^m,
    \end{equation}
    method \eqref{B_table_RK} provides the numerical solution
    \begin{subequations}\label{RK_methods}
        \begin{align}
            g_i & = u^n+\Delta t \sum_{j = 1}^{s} a_{ij} f(t_n+c_j \Delta t, g_j), \ i = 1,2, \ldots,s \;, \label{RK_methods_a}\\
            u(t_n+\Dt) \approx u^{n+1} & = u^{n}+\Delta t \sum_{j = 1}^{s} b_j f(t_n+c_j \Delta t, g_j)\;, \label{RK_methods_b}
        \end{align}
    where $\Delta t$ is a time step size, $u^n$ and $u^{n+1}$ are the numerical approximations to the true solution at time $t_n$ and $t_{n+1} = t_n+\Delta t$, respectively. Assume that the system \eqref{IVP_2} has a scalar conserved quantity $G_1:\Real^{m} \to \Real$, i.e., $G_1$ remains constant along each solution. In general, RK methods do not preserve this qualitative behavior discretely. To remedy this one may use a slight modification (known as relaxation) \cite{ketcheson2019relaxation,ranocha2020relaxation_1,ranocha2020relaxation} in which the update
    \begin{align}\label{RRK_methods_1}
        u(t_n+\gamma\Dt) \approx u^{n+1}_{\gamma} & = u^{n}+\gamma \Delta t \sum_{j = 1}^{s} b_j f(t_n+c_j \Delta t, g_j),
    \end{align}
    \end{subequations}
    is used instead of \eqref{RK_methods_b}, where $\gamma \in \Real$ is a scalar that is chosen by imposing the discrete conservation property 
    \begin{align} \label{eq:conservation}
       G_1(u^{n+1}_{\gamma})  =  G_1(u^{n}) \;.
    \end{align}
    The nonlinear algebraic equation \eqref{eq:conservation} must be solved at each time step.  For a $p$th-order baseline RK method, it can be shown that there exists a solution satisfying $\gamma = 1+\mathcal{O}(\Delta t^{p-1})$ (under some restrictions) \cite{ketcheson2019relaxation,ranocha2020relaxation_1}. Consequently, the relaxation RK method defined by \eqref{RK_methods_a} and \eqref{RRK_methods_1} has order $p$ when the new updated solution is interpreted as an approximation of the true solution at time $t_n+\gamma \Delta t$. 
    \subsection{Embedded Runge-Kutta Sets}
    In the next section we extend the relaxation idea to enforce conservation of multiple invariants.  In order to do so, we require multiple candidate new solutions $u^{n+1}$ that are all of sufficient accuracy.  A convenient and inexpensive way to obtain such solutions utilizes the concept of embedded RK methods.  

    We say a set of RK methods is embedded if all methods in the set share the same coefficient matrix $A$, with different weight vectors $b$.
    Thus the $k$th method in the set has coefficients
    denoted by $(A,\vec{b}^{\,k})$.  
    The accuracy of a solution involving all methods in the set will be governed by the order of the least accurate method, so we define
    \begin{align}
    p_{\min} = \min_k p_k
    \end{align}
    where $p_k$ is the order of method $(A,\bk)$.
    To apply such a set of methods, the stage equations \eqref{RK_methods_a}
    need only be solved once, after which the simple arithmetic updates
     \begin{align}\label{RK_methods_1}
        u^{n+1,k} & = u^{n}+\Delta t \sum_{j = 1}^{s} b_j^k f(t_n+c_j \Delta t, g_j),
    \end{align} 
    can be computed.  No additional evaluations of $f$ or solutions of algebraic equations are required.
    
    Traditional use of embedded methods has focused on pairs, in which one method is designed to have lower order and the difference between the two solutions is used as an estimate of the local error.  Here, in contrast, we will make use of sets consisting of $\ell\ge 2$ methods, and solutions from all methods in the set will be used as part of the numerical solution update.  Ideally, all methods in the set would have the same order of accuracy.  In the numerical examples in this work, we mainly make use of existing methods (for which the method orders differ), and leave the design of RK sets with equal order of accuracy to future work.
\section{Multiple Relaxation Runge-Kutta Methods}\label{sec:MRRK}
	So far the relaxation approach has been applied to conserve one quantity of a differential system. We generalize this methodology to conserve multiple nonlinear invariants. We assume that the system of differential equations has $\ell$ smooth invariant  functions $G_1(u)$, $G_2(u)$,...,$G_{\ell}(u)$ defined in a solution space in $\mathbb{R}^m$ and define
    \begin{align} \label{l-invariants}
        G := (G_1,G_2, \ldots, G_{\ell})^{T}: \mathbb{R}^m \to \mathbb{R}^{{\ell}} \;.
    \end{align}
    The basic idea to conserve $\ell \geq 1$ invariants is to use a set of ${\ell}$ linearly-independent embedded RK methods (each of order at least 2) and find a suitable direction in the plane spanned by directions induced by those ${\ell}$ methods so that the invariants are preserved:
    \begin{align} \label{relaxation-equations}
        G(u^{n+1}_{\vec{\gamma}}) & = G(u^n).
    \end{align}
    Here the updated solution $u^{n+1}_{\vec{\gamma}}$ is computed using given ${\ell}$ linearly-independent embedded RK methods as
    \begin{subequations}\label{modified_Update_rule}
        \begin{align}
            u(t_n + (1+\Gamma )\Delta t) \approx u^{n+1}_{\vec{\gamma}}  & := u^{n+1}+\Delta  t \sum_{i=1}^{{\ell}} \gamma_{i} d_{i}^{n} \;, \label{modified_Update_rule_a} \\
            d_{k}^{n} & :=\sum_{j = 1}^{s} b_{j}^{k} f(t_n+c_j \Delta t, g_j), \ \text{for} \ k = 1,2,...,{\ell}\;, \label{increments}
        \end{align}
    \end{subequations}
    where $\Gamma = \sum_i \gamma_i$, $u^{n+1}:=u^{n+1,1}=u^{n}+ \Delta t d_{1}^{n}$, $\vec{\gamma} = (\gamma_{1},\gamma_{2},\ldots, \gamma_{{\ell}}) \in \Real^{\ell}$, and stage values $g_i$'s are defined in \eqref{RK_methods_a}. The directions are computed according to given embedded RK methods $(A,\vec{b}^1,\vec{b}^2,\cdots,\vec{b}^{\ell})$
	where, by convention, the first method defined by the vector ${\vec{b}^1} = [b^{1}_{1},b^{1}_{2},\ldots, b^{1}_{s}]^T$ is used to compute $u^{n+1}$. At each step, we now require to solve a nonlinear system of $\ell$ equations in $\ell$ unknowns $(\gamma_1, \gamma_2,…,\gamma_{\ell})$. In the case  $\ell = 1$, this reduces to the usual relaxation approach. We refer to these generalized methods as multiple relaxation Runge-Kutta  (MRRK) methods.

   Method \eqref{modified_Update_rule} is closely related to the directional projection method of \cite{calvo2006preservation}, which also uses a set
   of embedded RK methods.  The main difference is that \eqref{modified_Update_rule} requires one less embedded method, but also requires the adjustment of the updated step time.

\section{Existence of Relaxation Parameters and Accuracy of the Methods}\label{sec:existence}
In this section we prove the existence (when the time step is small enough) of a vector $\vec{\gamma}$ that
satisfies the conservation property and ensures the multiple-relaxation method is accurate to the same order
as the RK method on which it is based.  We start with a result on the size of $\gamma_i$; for related existing
results see \cite[Thm.~4.1]{calvo2006preservation} and \cite[Thm.~2.14]{ranocha2020general}.

   \begin{lemma}
        Suppose that the IVP \eqref{IVP_2} has $\ell \geq 1$ smooth conserved quantities \eqref{l-invariants}, and let a set of embedded Runge-Kutta methods $(A,\vec{b}^{\,1},\dots,\vec{b}^{\,\ell})$, be given such that the order of method $(A,\vec{b}^{\,1})$ is $p\ge 2$. Let $u^n = u(t_n) \in \Real^m$, $D_n = [d_{1}^{n}|\ldots|d_{\ell}^{n}]$  with $d_{k}^{n}$ defined in \eqref{increments}, computed with a time step  $\Delta t \ge 0$, and $u^{n+1} = u^n+\Delta t d_{1}^{n}$. If 
        \begin{align}\label{grad_assumption}
            \nabla G(u^{n+1}) \cdot D_n = B(u^n)\Delta t+\O(\Delta t^2)
        \end{align}     
        holds with non-singular $B(u^n)$, then there exists $\Delta t^{*} > 0$ such that for every $\Delta t \in [0,\Delta t^{*}]$ there is a unique vector $\vec{\gamma}$ 
        such that equations \eqref{relaxation-equations}-\eqref{modified_Update_rule} are satisfied and 
        $$\gamma_{i} = \O(\Delta t^{p-1}), \ \text{for } \ i = 1,2,\cdots, \ell.$$
    \end{lemma}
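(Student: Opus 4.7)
The plan is to introduce the residual
$$F(\vec{\gamma}, \Delta t) := G\!\left( u^{n+1} + \Delta t \sum_{i=1}^{\ell} \gamma_i\, d_i^n \right) - G(u^n),$$
viewed as a smooth map $\Real^\ell \times \Real \to \Real^\ell$, and to obtain $\vec{\gamma}(\Delta t)$ by applying the implicit function theorem (IFT) to a suitably rescaled version of $F$. A naive application at the base point $(\vec{\gamma}, \Delta t) = (0, 0)$ fails: both $F(0,\Delta t)$ and $\partial_{\vec{\gamma}} F |_{\vec{\gamma}=0}$ vanish to high order in $\Delta t$, so the argument will hinge on finding the correct rescaling in $\vec{\gamma}$ that balances these two degeneracies simultaneously.

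First I would gather the two key quantitative inputs. Since $u^n = u(t_n)$, the exact flow conserves $G$, and the first embedded method has order $p$, one has $G(u^{n+1}) - G(u(t_n + \Delta t)) = \mathcal{O}(\Delta t^{p+1})$ and $G(u(t_n + \Delta t)) = G(u^n)$, so $F(0, \Delta t) = \Delta t^{p+1} q(\Delta t)$ for some smooth remainder $q$. Differentiating \eqref{modified_Update_rule_a} gives $\partial_{\vec{\gamma}} F(\vec{\gamma}, \Delta t) = \Delta t\, \nabla G(u^{n+1}_{\vec{\gamma}})^T D_n$, which at $\vec{\gamma} = 0$ equals $\Delta t^2 B(u^n) + \mathcal{O}(\Delta t^3)$ by hypothesis \eqref{grad_assumption}. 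These expansions indicate the correct rescaling $\vec{\gamma} = \Delta t^{p-1} \tilde{\vec{\gamma}}$, and I would set
$$\tilde{F}(\tilde{\vec{\gamma}}, \Delta t) := \Delta t^{-(p+1)} F\!\left( \Delta t^{p-1} \tilde{\vec{\gamma}}, \Delta t \right),$$
then expand $F$ to second order in $\vec{\gamma}$ to obtain
$$\tilde{F}(\tilde{\vec{\gamma}}, \Delta t) = q(\Delta t) + \bigl( B(u^n) + \mathcal{O}(\Delta t) \bigr) \tilde{\vec{\gamma}} + \mathcal{O}\!\left( \Delta t^{p-1} |\tilde{\vec{\gamma}}|^2 \right).$$

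Because $p \ge 2$, the right-hand side extends to a $C^1$ map on a neighborhood of $\{\Delta t = 0\}$, with limiting value $\tilde{F}(\tilde{\vec{\gamma}}, 0) = q(0) + B(u^n) \tilde{\vec{\gamma}}$ and limiting Jacobian $\partial_{\tilde{\vec{\gamma}}} \tilde{F}|_{\Delta t = 0} = B(u^n)$. The unique root at $\Delta t = 0$ is $\tilde{\vec{\gamma}}_0 = -B(u^n)^{-1} q(0)$, and since $B(u^n)$ is nonsingular the IFT produces a unique $C^1$ branch $\tilde{\vec{\gamma}}(\Delta t)$ on some interval $[0, \Delta t^*]$ with $\tilde{F}(\tilde{\vec{\gamma}}(\Delta t), \Delta t) = 0$. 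Undoing the rescaling yields $\vec{\gamma}(\Delta t) = \Delta t^{p-1} \tilde{\vec{\gamma}}(\Delta t) = \mathcal{O}(\Delta t^{p-1})$, which solves \eqref{relaxation-equations}; uniqueness in a neighborhood of $\vec{\gamma} = 0$ is inherited from the IFT via the rescaling. The main obstacle I anticipate is the bookkeeping needed to confirm that $\tilde{F}$ really is $C^1$ up to $\Delta t = 0$: every monomial $\vec{\gamma}^\alpha \Delta t^k$ in the joint Taylor expansion of $F$ about $(0,0)$ must satisfy $k + (p-1)|\alpha| \ge p+1$. The case $\alpha = 0$ follows from order $p$ accuracy together with exact conservation of $G$, while the case $|\alpha| \ge 1$ follows from the explicit factor of $\Delta t$ carried by $\partial_{\vec{\gamma}} F$ combined with $p \ge 2$.
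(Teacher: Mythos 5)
Your proof is correct and follows essentially the same route as the paper: form the conservation residual, use order-$p$ accuracy plus exact conservation and the hypothesis \eqref{grad_assumption} to identify its leading behavior, and apply the implicit function theorem via the nonsingularity of $B(u^n)$. The only difference is organizational: the paper divides the residual by $\Delta t^2$ and applies the IFT at the base point $\vec{\gamma}=\vec{0}$, extracting $\gamma_i=\O(\Delta t^{p-1})$ afterward by a separate Taylor expansion, whereas your rescaling $\vec{\gamma}=\Delta t^{p-1}\tilde{\vec{\gamma}}$ (and division by $\Delta t^{p+1}$) builds that estimate into the IFT base point $\tilde{\vec{\gamma}}_0=-B(u^n)^{-1}q(0)$ directly.
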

    \begin{proof}
        The proof is similar to the corresponding part of that of \cite[Thm~4.1]{calvo2006preservation}.
        Consider the real function $g : \mathbb{R}\times\mathbb{R}^{\ell} \to \mathbb{R}^{\ell}$ defined by 
        \begin{align}\label{Lemma_eq1}
            g(\Delta t, \vec{\gamma}) :=      
           \frac{G\left(u^{n+1}+\Delta t \sum_{i=1}^{{\ell}} \gamma_{i} d_{i}^{n}\right)- G(u^{n})}{\Delta t^2}, & \ \text{for} \ \Delta t \neq 0 \;.
        \end{align}
        Note that the numerical solution of an RK method of order $p$ satisfies (with the assumption $u^{n} = u(t_n)$)
        \begin{align} \label{accuracy_diff_G}
            G(u^{n+1})- G(u^{n}) & = G\left(u(t_{n+1})+\O(\Delta t^{p+1})\right) - G_i\left(u(t_{n})\right) \nonumber \\
            & = G\left(u(t_{n+1})\right) - G\left(u(t_{n})\right) +\O(\Delta t^{p+1}) \\
            & = \O(\Delta t^{p+1})\;.  \nonumber
        \end{align}
        Using Taylor's theorem with $h_n  := \sum_{i=1}^{{\ell}} \gamma_{i} d_{i}^{n}$, the assumption \eqref{grad_assumption}, and equation \eqref{accuracy_diff_G} in \eqref{Lemma_eq1} we write
        \begin{align*}\label{Lemma1_eq3}
             g(\Delta t, \vec{\gamma}) & = \Delta t^{-2} \left[ G\left(u^{n+1}+ \Delta t h_n\right) - G(u^{n})\right]  \\
             & = \Delta t^{-2} \left[ G(u^{n+1}) + \Delta t \nabla G(u^{n+1}) \cdot h_{n} + \Delta t^2 \frac{G''(u^{n+1})}{2}(h_{n},h_{n}) +\O(\Delta t^3) - G(u^{n})\right]  \\
             & = \frac{G(u^{n+1})- G(u^{n})}{\Delta t^2} + \Delta t^{-1} \nabla G(u^{n+1}) \cdot D_{n} \vec{\gamma}+\frac{G''(u^{n+1})}{2} (h_{n},h_{n}) +\O(\Delta t)  \\
             & = \O(\Delta t^{p-1}) + \left(B(u^n)+\O(\Delta t)\right)\vec{\gamma}+\frac{G''(u^{n+1})}{2} (h_{n},h_{n})+\O(\Delta t)\\
             & = \left(B(u^n)+\O(\Delta t)\right)\vec{\gamma}+\frac{G''(u^{n+1})}{2} (h_{n},h_{n})+\O(\Delta t) \ \text{since} \ p \geq 2 \;.
        \end{align*}
        Thus we define
        \begin{align}
            g(0,\vec{\gamma}):=\displaystyle{\lim_{\Delta t \to 0} g(\Delta t, \vec{\gamma})} = B(u^n)\vec{\gamma}+\left. \frac{G''(u^{n+1})}{2!} (h_{n},h_{n})\right|_{\Delta t = 0}\;,
        \end{align}
        so that $g(\Delta t,\vec{\gamma})$ is continuous for all $\Delta t \geq 0$. Notice that $g(0,\vec{0}) = 0$. Furthermore, $g$ is differentiable with respect to $\Vec{\gamma}$ and its Jacobian is given by
        $$J_{g,\vec{\gamma}}(0,\vec{0}) = B(u^n).$$
        Since $B(u^n)$ is non-singular by assumption, the implicit function theorem guarantees the existence of a $\Delta t^{*} > 0$ and a unique function $$\vec{\gamma}(\Delta t) = \left(\gamma_1(\Delta t), \gamma_2(\Delta t),\ldots, \gamma_{\ell}(\Delta t)\right)$$
        such that $\vec{\gamma}(0) = \vec{0}$ and for any $\Delta t \in [0,\Delta t^{*}]$ we have $g(\Delta t,\vec{\gamma}(\Delta t)) = 0$. Hence it follows from \eqref{Lemma_eq1} that the method defined in \eqref{modified_Update_rule} applied with this set of relaxation parameters will satisfy \eqref{relaxation-equations}. 
        
        To know the accuracy of $\vec{\gamma}$ consider the expansion
        \begin{align}\label{Lemma1_eq4}
            g(\Delta t,\vec{\gamma}) = g(\Delta t,\vec{0}) + J_{g,\vec{\gamma}}(\Delta t,\vec{0}) \cdot \vec{\gamma} + \mathcal{O}(||\vec{\gamma}||^2) 
        \end{align}
        with 
        \begin{align}\label{Lemma1_eq5}
            g(\Delta t,\vec{0}) = \frac{G(u^{n+1})- G(u^{n})}{\Delta t^2} = \mathcal{O}(\Delta t^{p-1}), \ \text{by} \ \eqref{accuracy_diff_G} \;,
        \end{align} 
        and 
        \begin{align}\label{Lemma1_eq6}
            J_{g,\vec{\gamma}}(\Delta t,\vec{0}) = J_{g,\vec{\gamma}}(0,\vec{0}) + \mathcal{O}(\Delta t) = B(u^n) +\mathcal{O}(\Delta t) \;. 
        \end{align}
        Using \eqref{Lemma1_eq4}, \eqref{Lemma1_eq5}, and \eqref{Lemma1_eq6} we conclude that each component of the vector $\vec{\gamma}$ is $\mathcal{O}(\Delta t^{p-1})$.   
    \end{proof}
    \begin{theorem}
        Suppose the IVP \eqref{IVP_2} has $\ell \geq 1$ smooth conserved quantities \eqref{l-invariants} and let $(A,\vec{b}^{\,1},\dots,\vec{b}^{\,\ell})$ be the coefficients of a set of $\ell$ embedded Runge-Kutta methods with orders $p, p_2,\ldots p_{\ell}$, where $p \geq 2$ and $p_i \geq 1$  for $i=2,\ldots, \ell$. Consider the generealized relaxation method defined by \eqref{relaxation-equations}-\eqref{modified_Update_rule} and assume $\vec{\gamma} = \O(\Delta t^{p-1})$. Then:
        \begin{enumerate}
            \item If the solution $u^{n+1}_{\vec{\gamma}}$ is interpreted as an approximation to $u(t_n+\Delta t)$, the method has order $p-1$.
            \item The generalized relaxation method interpreting $u^{n+1}_{\vec{\gamma}}$ as an approximation to $u(t^{n+1}_{\vec{\gamma}})$ has order $p$, where $t^{n+1}_{\vec{\gamma}}=t_n+\left(1+\Gamma \right)\Delta t$.
        \end{enumerate}
    \end{theorem}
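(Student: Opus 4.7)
Both statements reduce to the observation that, by the preceding lemma, $\vec{\gamma} = \O(\Delta t^{p-1})$, so the relaxation correction $\Delta t\sum_i \gamma_i d_i^n$ in \eqref{modified_Update_rule_a} has size $\O(\Delta t^p)$. Part (1) then follows from the triangle inequality, while part (2) additionally needs a first-order Taylor expansion of the exact solution around $t_n+\Delta t$.

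For part (1), I would decompose
\begin{align*}
u^{n+1}_{\vec{\gamma}} - u(t_n+\Delta t) = \bigl(u^{n+1} - u(t_n+\Delta t)\bigr) + \Delta t \sum_{i=1}^{\ell}\gamma_i d_i^n.
\end{align*}
The first term is $\O(\Delta t^{p+1})$ since $(A,\vec{b}^{\,1})$ has order $p$; the second is $\O(\Delta t^p)$ because each $\gamma_i = \O(\Delta t^{p-1})$ and each $d_i^n$ is a uniformly bounded linear combination of the slopes $f(t_n+c_j\Delta t, g_j)$. This gives a local error of $\O(\Delta t^p)$, hence global order $p-1$.

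For part (2), the point is that moving the target from $t_n+\Delta t$ to $t^{n+1}_{\vec{\gamma}} = t_n+(1+\Gamma)\Delta t$ recovers one order. I would Taylor expand
\begin{align*}
u\bigl(t^{n+1}_{\vec{\gamma}}\bigr) = u(t_n+\Delta t) + \Gamma \Delta t\, \dot{u}(t_n+\Delta t) + \O\bigl((\Gamma \Delta t)^2\bigr),
\end{align*}
and use $\Gamma = \sum_i\gamma_i = \O(\Delta t^{p-1})$ to bound the remainder by $\O(\Delta t^{2p}) \subseteq \O(\Delta t^{p+1})$ for $p\ge 1$. The consistency $\sum_j b_j^{\,k}=1$ (guaranteed by $p_k\ge 1$) together with smoothness of $f$ gives $d_i^n = \dot{u}(t_n)+\O(\Delta t) = \dot{u}(t_n+\Delta t)+\O(\Delta t)$, so that
\begin{align*}
\Delta t \sum_i \gamma_i d_i^n = \Gamma \Delta t\, \dot{u}(t_n+\Delta t) + \O(\Delta t^{p+1}).
\end{align*}
Combining these estimates with $u^{n+1} - u(t_n+\Delta t) = \O(\Delta t^{p+1})$ yields $u^{n+1}_{\vec{\gamma}} - u(t^{n+1}_{\vec{\gamma}}) = \O(\Delta t^{p+1})$, hence global order $p$.

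The hard part is the bookkeeping in part (2): checking that the $\vec{\gamma}$-weighted combination of the approximate slopes $d_i^n$ agrees with $\Gamma\,\dot{u}(t_n+\Delta t)$ to within $\O(\Delta t^{p+1})$, which is exactly where the assumption that every embedded method is at least first-order consistent ($p_k \ge 1$) is used. One should also be careful that ``global order $p$'' here is understood in the sense of convergence of $u^{n+1}_{\vec{\gamma}}$ to $u(t^{n+1}_{\vec{\gamma}})$ on the perturbed time grid $\{t^n_{\vec{\gamma}}\}$, exactly as in the single-invariant analysis of \cite{ranocha2020relaxation_1}.
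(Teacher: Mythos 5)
Your proposal is correct and follows essentially the same route as the paper: both parts rest on the estimate $\Delta t\, d_i^n = \Gamma^{-1}$-free $\Delta t\,\dot{u}(t_{n+1})+\O(\Delta t^2)$ (you derive it from the consistency $\sum_j b_j^{\,k}=1$, the paper from the first-order accuracy of each embedded update $u^{n+1,i}$ — the same fact), combined with $\vec{\gamma}=\O(\Delta t^{p-1})$ and, for part (2), the Taylor expansion of $u$ at the shifted time $t_n+(1+\Gamma)\Delta t$. No gaps; your closing remark about interpreting the order on the perturbed time grid matches the paper's intent.
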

    \begin{proof}
     Using \eqref{increments} and the assumption $u^{n} = u(t_n)$ we can write 
    \begin{equation}\label{Eq:theorem1_1}
        \begin{aligned}
        \Delta t d_{i}^{n} =\Delta t \sum_{j = 1}^{s} b_{j}^{i} f(t_n+c_j \Delta t, g_j) & = u^{n+1,i} - u^{n} \\
        & = u(t_{n+1})+\O(\Delta t^2) - u(t_{n}) \ \text{since each $p_i \geq 1$} \\
        & = \Delta t \dot{u}(t_{n+1})+\O(\Delta t^2) \ \text{using Taylor's theorem} \;.
        \end{aligned}  
    \end{equation}
    
     Using the accuracy of the method $(A,\vec{b}^{\; 1})$, \eqref{Eq:theorem1_1} and the notation $\Gamma = \sum_i \gamma_i$, we obtain
     \begin{equation}\label{Eq:theorem1_2}
        \begin{aligned}
            u^{n+1}_{\vec{\gamma}}  & = u^{n+1}+\Delta  t \sum_{i=1}^{{\ell}} \gamma_{i} d_{i}^{n}  \\
            & = u(t_{n+1})+\O(\Delta t^{p+1}) + \sum_{i=1}^{{\ell}} \gamma_{i} \left(\Delta t \dot{u}(t_{n+1})+\O(\Delta t^2)\right) \\
            & = u(t_{n+1}) + \Gamma \Delta t \dot{u}(t_{n+1})+ \O(\Delta t^{p+1}) + \Gamma \O(\Delta t^2) \;.
        \end{aligned}
     \end{equation}
      This shows that the MRRK method interpreted as an approximation to $u(t_{n+1})$ has order $(p-1)$ since $\Gamma = \O(\Delta^{p-1})$.

    When we interpret the solution of the MRRK method at $t^{n+1}_{\vec{\gamma}}$, we can write using Taylor expansion
    \begin{equation}\label{Eq:theorem1_3}
        \begin{aligned}
            u\left(t^{n+1}_{\vec{\gamma}}\right) & = u(t_{n+1}+\Gamma \Delta t) = u(t_{n+1}) + \Gamma \Delta t \dot{u}(t_{n+1}) + \O\left(\left(\Gamma \Delta t\right)^2\right) \;.
        \end{aligned}
    \end{equation}
    Subtracting \eqref{Eq:theorem1_3} from \eqref{Eq:theorem1_2} results in $u^{n+1}_{\vec{\gamma}} - u\left(t^{n+1}_{\vec{\gamma}}\right) = \O(\Delta t^{p+1})$ since $\Gamma = \O(\Delta^{p-1})$, and hence the MRRK method has order $p$.  
    \end{proof}

\section{Numerical ODE Examples} \label{sec:numerical}
    This section illustrates the effects of multiple relaxation RK methods on several problems with multiple invariants. The following numerical schemes with embedded methods are used in the numerical experiments.
    
    \begin{itemize}
        \item SSPRK(2,2): Two-stage, second-order SSP method \cite{shu1988efficient} with a first-order embedded method (appendix~\ref{app:butcherTableau}).
        \item SSPRK(3,3): Three-stage, third-order SSP method \cite{shu1988efficient} with second-order embedded methods (appendix~\ref{app:butcherTableau}).
        \item Heun(3,3): Three-stage, third-order Heun's method \cite{heun1900neue} with a second-order embedded method (appendix~\ref{app:butcherTableau}).
        \item RK(4,4): Classical four-stage, fourth-order method with a second-order embedded method (appendix~\ref{app:butcherTableau}).
        \item Fehlberg(6,4): Six-stage, fourth-order Fehlberg's method \cite{fehlberg1969low} with third-order embedded methods (appendix~\ref{app:butcherTableau}).
        \item Fehlberg(6,5): Six-stage, fifth-order Fehlberg's method with a fourth-order embedded method \cite{fehlberg1969low}.
        \item DP(7,5): Seven-stage, fifth-order method \cite{prince1981high} with fourth-order embedded methods (appendix~\ref{app:butcherTableau}).
        \item ARK3(2)4L[2]SA: Four-stage, third-order additive Runge-Kutta (ARK) method with a second-order embedded ARK method \cite[Appendix C]{kennedy2003additive}. 
        \item ARK4(3)6L[2]SA: Six-stage, fourth-order ARK method with a third-order embedded ARK method \cite[Appendix C]{kennedy2003additive}.
    \end{itemize}
    Given a set of embedded RK methods, the MRRK method defined in \eqref{modified_Update_rule} requires solving a small system of nonlinear equations for the relaxation parameters at each time step. We use the general nonlinear solver \verb|scipy.optimize.fsolve | to solve the nonlinear system for most of the numerical examples below. In some cases, a combination of optimizers \verb|scipy.optimize.brute| and \verb|scipy.optimize.fmin| is used to find the best solution for the relaxation parameters. The update rule \eqref{modified_Update_rule} for the MRRK method, unlike in \eqref{RRK_methods_1}, uses $u^{n+1}$ instead of $u^{n}$, which is observed to provide better robustness in finding the relaxation parameters. Note that, in different cases, sometimes different time steps are required for some methods (see numerical examples) to guarantee the existence of relaxation parameters at all time steps in the simulation. Implementations for all the numerical examples below are provided in \cite{MRRK_code}. We also applied the directional projection technique from \cite{calvo2006preservation} and found comparable results to MRRK methods. However, MRRK methods have the advantage of needing one fewer embedded method.
    \subsection{Rigid Body Rotation}
    Consider the system of Euler equations that describes the motion of a free rigid body with its center of mass at the origin, in terms of its angular momenta:
    \begin{subequations}\label{Eq:Euler_equations}
        \begin{align}
            &\dot{u}_{1}=(\alpha - \beta)u_2u_3 \\
            &\dot{u}_{2}=(1-\alpha)u_3u_1  \\
            &\dot{u}_{3}=(\beta-1)u_1u_2 \;,
        \end{align}
    \end{subequations}
    with $\left(u_1(0),u_2(0),u_3(0)\right)^T = (0,1,1)^T$, $\alpha = 1 + \frac{1}{\sqrt{1.51}}$, and $\beta = 1 - \frac{0.51}{\sqrt{1.51}}$. The exact solution is
    \begin{align}
        \left(u_1(t),u_2(t),u_3(t)\right)^T = \left(\sqrt{1.51} \  \textrm{sn}(t,0.51),\textrm{cn}(t,0.51),\textrm{dn}(t,0.51)\right)^T \;,
    \end{align}
    where $\textrm{sn},\ \textrm{cn},\ \text{and} \ \textrm{dn}$ are the elliptic Jacobi functions. This problem has two quadratic conserved quantities:
    \begin{subequations}\label{Eq:Euler_invariantas}
       \begin{align}
            & G_1(u_{1},u_{2},u_{3}) = u_{1}^{2}+ u_{2}^{2}+u_{3}^{2} \;, \\
            & G_2(u_{1},u_{2},u_{3}) = u_{1}^{2}+ \beta u_{2}^{2}+\alpha u_{3}^{2} \;.
        \end{align}
    \end{subequations}
    Here $G_{2}$ is the kinetic energy of the body. First, we present the convergence results obtained with different RK and MRRK methods to confirm that relaxation with multiple invariants also produces the desired order of convergence. Using four different RK schemes (SSPRK(2,2), Heun(3,3), RK(4,4), and DP(7,5)) and their multiple relaxation versions, we solve the system to a final time of $5$ and report the convergence results in Figure ~\ref{fig:EE_Error_Convergence}. Note that we get better order of accuracy than the theoretically expected orders for all the MRRK methods.
    \begin{figure}
     \centering
     \begin{subfigure}[b]{0.48\textwidth}
         \centering
         \includegraphics[width=\textwidth]{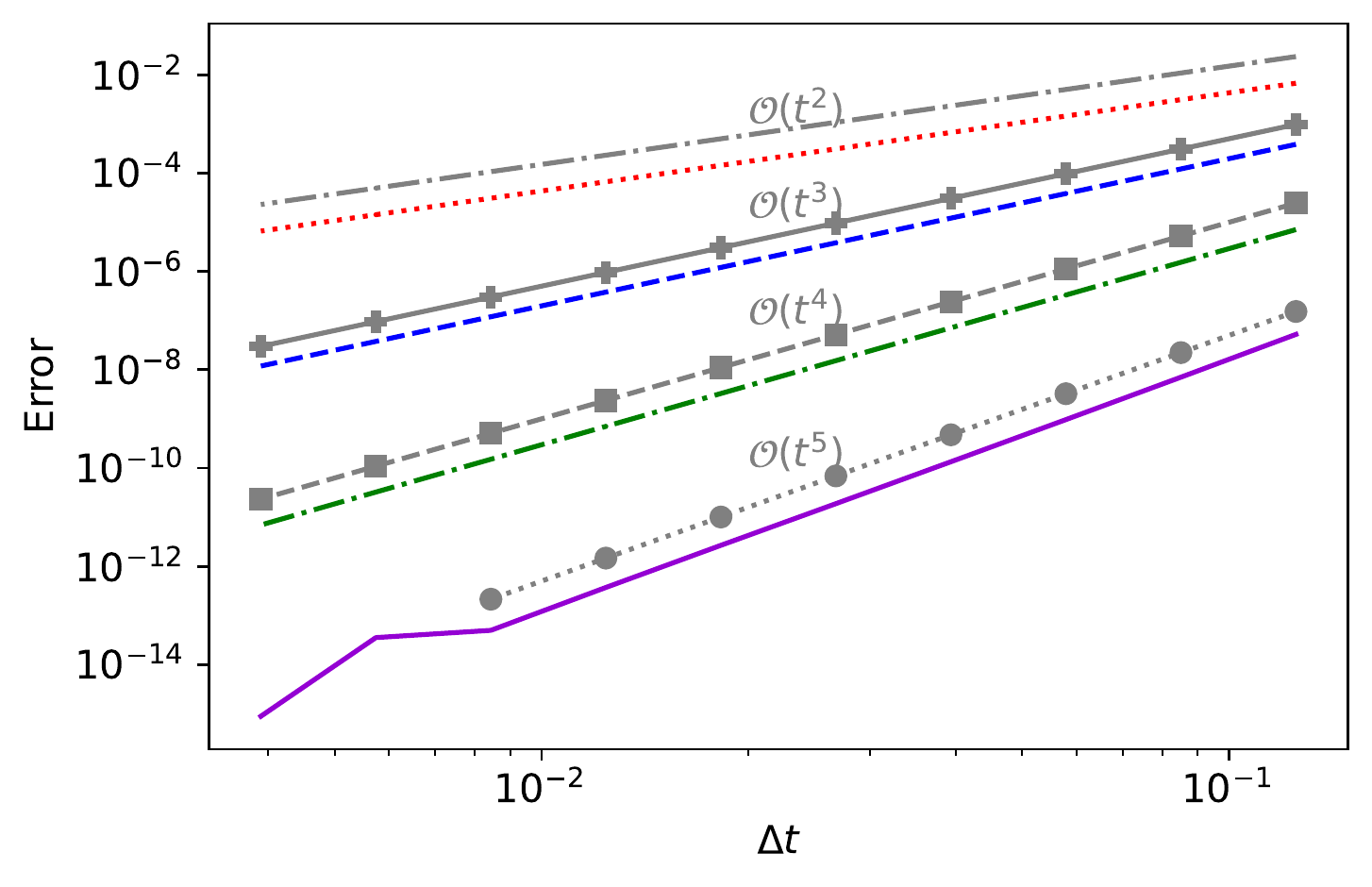}
         \caption{Baseline methods.}
     \end{subfigure}
     \hfill
     \begin{subfigure}[b]{0.48\textwidth}
         \centering
         \includegraphics[width=\textwidth]{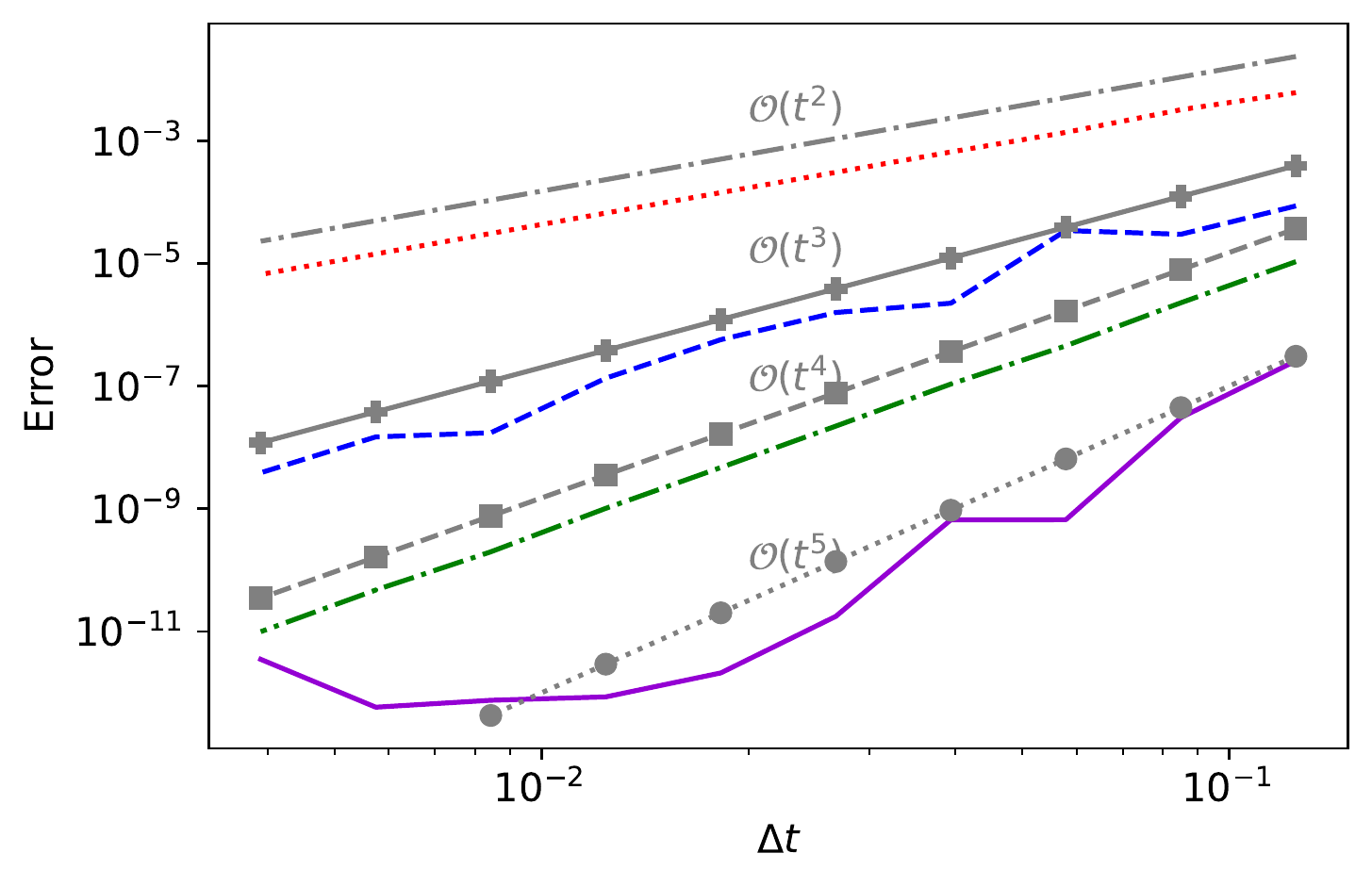}
         \caption{Multiple relaxation RK methods.}
     \end{subfigure}
    	\caption{Convergence of numerical solution by different RK methods and their multiple relaxation versions for rigid body rotation \eqref{Eq:Euler_equations}.}
    	\label{fig:EE_Error_Convergence}
    \end{figure}

    \begin{figure}
        \centering
        \includegraphics[width=\textwidth]{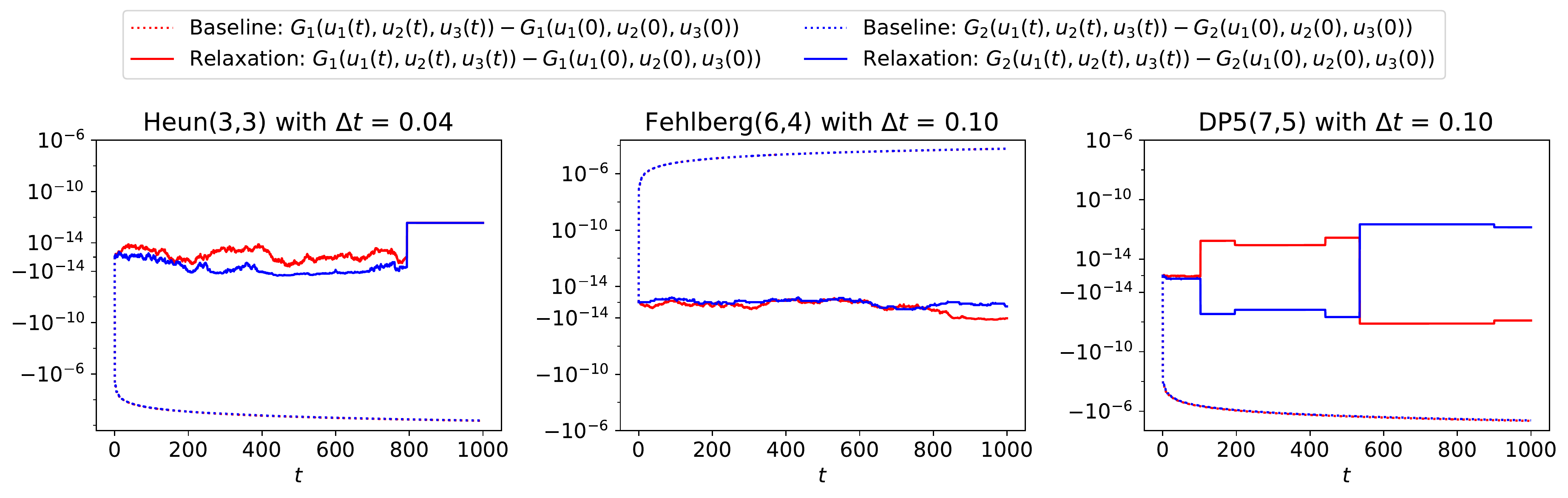}  
        \caption{Changes in invariant \eqref{Eq:Euler_invariantas} obtained with different methods for rigid body rotation \eqref{Eq:Euler_equations}.}
        \label{fig:EE_Evolution_conserved_quantities}
    \end{figure}
    
    \begin{figure}
        \centering
        \includegraphics[width=\textwidth]{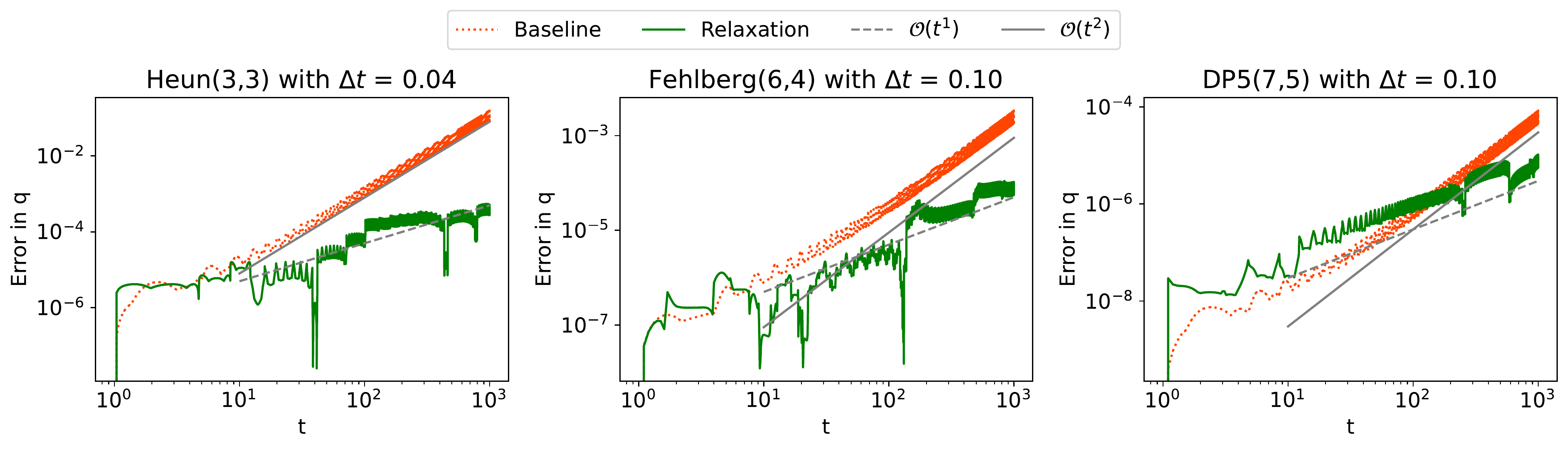} 
    	\caption{Error growth over time for rigid body rotation \eqref{Eq:Euler_equations}.}
    	\label{fig:EE_error_growth}
    \end{figure}
    Next, we study the error in invariants and the error growth over time by various methods. We integrate the problem with three explicit RK methods and their multiple relaxation versions, Heun(3,3) with $\Delta t = 0.04$, Fehlberg(6,4) with $\Delta t = 0.1$, and DP(7,5) with $\Delta t = 0.1$. Figure ~\ref{fig:EE_Evolution_conserved_quantities} plots the error in conserved quantities, confirming that all the MRRK methods preserve both invariants. The solution error growth by all the methods is plotted in Figure ~\ref{fig:EE_error_growth}, which shows a linear error growth when the two invariants are preserved by the MRRK methods. In contrast, the baseline methods produce errors that increase quadratically.

    \subsection{Bi-Hamitonian 3D Lotka-Volterra Systems}
    Next, we consider an ecological predator-prey model of the Lotka-Volterra systems in $3$D, whose equations are given as
    \begin{subequations}\label{Eq:3D_LVS}
        \begin{align}
            &\dot{u}_{1}=u_{1}\left(c u_{2}+u_{3}+\lambda\right) \\
            &\dot{u}_{2}=u_{2}\left(u_{1}+a u_{3}+\mu\right) \\
            &\dot{u}_{3}=u_{3}\left(b u_{1}+u_{2}+\nu\right) \;,
        \end{align}
    \end{subequations}
    where $\lambda, \ \mu, \ \nu >0$, $abc=-1$ and $\nu = \mu b - \lambda ab$. We study this problem on the interval $[0,400]$ with parameters $(a,b,c,\lambda,\mu,\nu) = (-1,-1,-1,0,1,-1)$ and the initial condition is taken as $\left(u_1(0),u_2(0),u_3(0)\right)^T = (1,1.9,0.5)^T$. This system has periodic solutions and possesses two nonlinear conserved quantities, known as Casimirs of some skew-symmetric Poisson matrices \cite{ionescu2015geometrical,uzunca2019preservation}:
        \begin{subequations} \label{Eq:3D_LVS_invariants}
            \begin{align}
            & H_1 = ab \ln{u_1} - b \ln{u_2} + \ln{u_3} \;, \\
            & H_2 = ab u_1 + u_2 - a u_3 + \nu \ln{u_2} - \mu \ln{u_3}.
        \end{align}
    \end{subequations}
    
    \begin{figure}
     \centering
        \includegraphics[width=\textwidth]{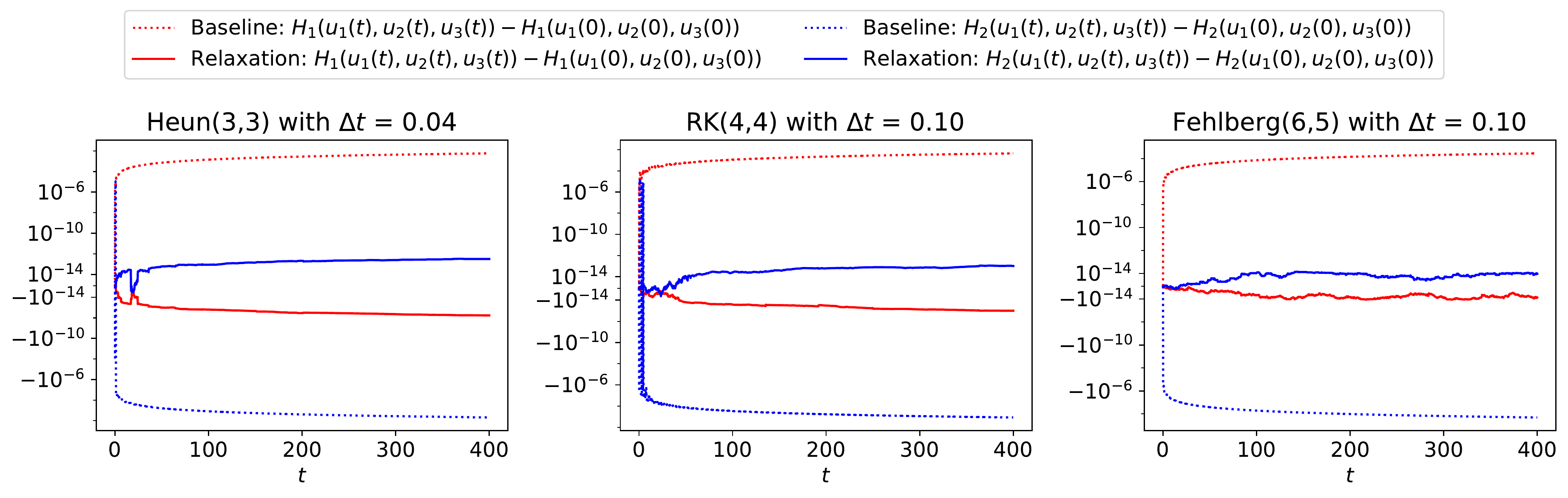}  
    	\caption{Changes in invariant \eqref{Eq:3D_LVS_invariants} obtained with different methods for a bi-Hamiltonian $3$D Lotka-Volterra system \eqref{Eq:3D_LVS}.}
    	\label{fig:3DLVS_Evolution_conserved_quantities}
    \end{figure}
    We apply Heun(3,3) with $\Delta t = 0.04$, RK(4,4) with $\Delta t = 0.1$, and Fehlberg(6,5) with $\Delta t = 0.1$ to solve the system with and without multiple relaxation. All baseline and MRRK methods preserve the periodicity of the solution. The errors in invariants are shown in Figure ~\ref{fig:3DLVS_Evolution_conserved_quantities}, and we can see that all the MRRK methods preserve the nonlinear invariants for the system over a long time. 
    \begin{figure}
     \centering
     \includegraphics[width=\textwidth]{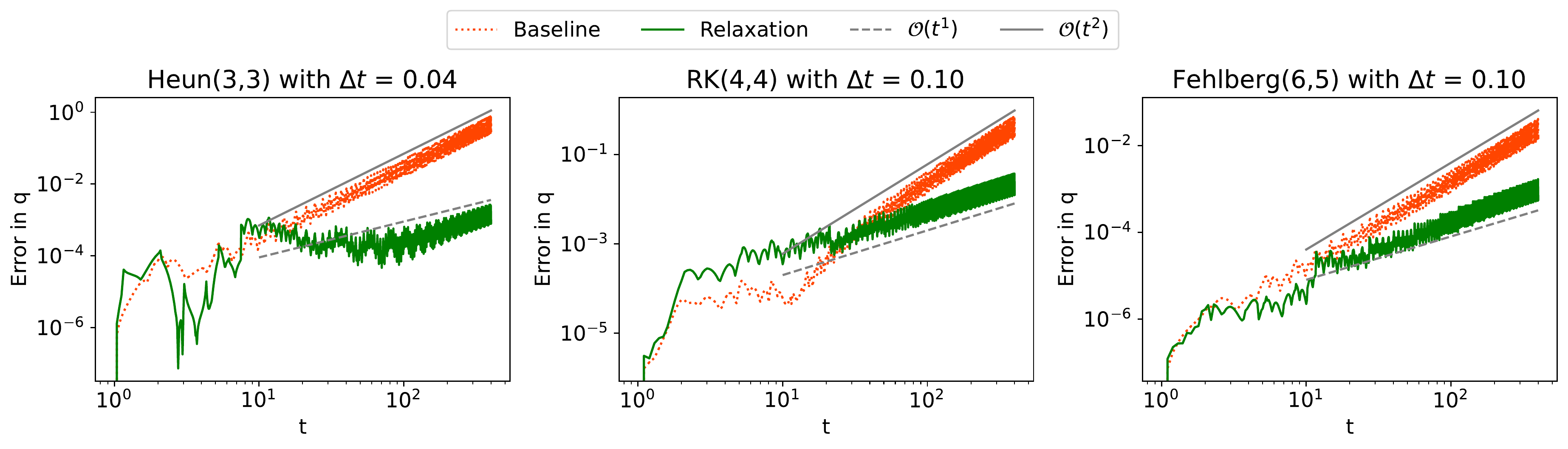}  
     	\caption{Error growth over time for a bi-Hamiltonian $3$D Lotka-Volterra system \eqref{Eq:3D_LVS}.}
    	\label{fig:3DLVS_error_growth}
    \end{figure}
    As the closed form of the analytical solution of this system is not known, as a proxy for the exact solution we use the dense output of the Python interface class \verb|scipy.integrate.ode| with \verb|dopri5| method with minimal values for the relative and absolute tolerances. We measure the error in the maximum norm and plot the error over time in Figure ~\ref{fig:3DLVS_error_growth}. The invariant-preserving MRRK methods show asymptotically linear error growth and eventually win in solution accuracy over the quadratically increasing errors of the corresponding baseline methods. 
    
    \subsection{Kepler's Problem}
    \subsubsection{Kepler's Two-Body Problem with Three Invariants}
    So far, each of the numerical examples considered above involves two conserved quantities. We now study Kepler's Two-Body problem with three invariants to demonstrate that the relaxation process can conserve more than two invariants for a system. With one of the two bodies fixed at the center of the $2$D plane, the motion of the other body with position $q = (q_{1},q_{2})$ and momentum $p = (p_{1},p_{2})$ is given by the following system of first order differential equations
    \begin{subequations}\label{Eq:Kepler_two_body_problem}
        \begin{align}
            \dot{q_{1}} & = p_{1} \\
            \dot{q_{2}} & = p_{2} \\
            \dot{p_{1}} & = -\frac{q_{1}}{\left(q_{1}^2+q_{2}^2\right)^{3/2}} \\
            \dot{p_{2}} & =-\frac{q_{2}}{\left(q_{1}^2+q_{2}^2\right)^{3/2}} \;.
        \end{align}
    \end{subequations}
    Three conserved quantities for Kepler's Two-Body system that we consider for our numerical studies are
    \begin{subequations} \label{Eq:Kepler_invariants}
        \begin{align}
            H(q,p) & = \frac{1}{2}\left(p_{1}^2+p_{2}^2\right)-\frac{1}{\sqrt{q_{1}^2+q_{2}^2}} \ (\text{Hamiltonian}) \;, \\
        	L(q,p) & = q_{1}p_{2}-q_{2}p_{1}\ (\text{angular momentum}) \;, \\
        	A(q,p) & = ||V||_2 \;,
        \end{align}
    \end{subequations}
   where in the last invariant, the well-known Laplace–Runge–Lenz vector function $V$ \cite[Page 26]{haier2006geometric} is defined as 
  \begin{align}
        V & = \begin{bmatrix}
        p_{1} \\
        p_{2} \\
        0
        \end{bmatrix} \times \begin{bmatrix}
        0 \\
        0 \\
        q_{1} p_{2}-q_{2} p_{1}
        \end{bmatrix}-\frac{1}{\sqrt{q_{1}^{2}+q_{2}^{2}}} \begin{bmatrix}
        q_{1} \\
        q_{2} \\
        0
        \end{bmatrix}\;.
   \end{align}
    We consider the Two-Body problem with initial condition $$ \left(q_{1}(0),q_{2}(0),p_{1}(0),p_{2}(0)\right)^{T}=\left(1-e,0,0,\sqrt{\frac{1+e}{1-e}}\right)^{T}$$ with $e=0.5$, and study the conservation of invariants \eqref{Eq:Kepler_invariants} and the global error. Three explicit methods, SSPRK(3,3) with $\Delta t = 0.05$, Fehlberg(6,4) with $\Delta t = 0.05$, and DP(7,5) with $\Delta t = 0.1$ are used as baseline methods to solve the problem.
   \begin{figure}
        \centering
        \includegraphics[width=\textwidth]{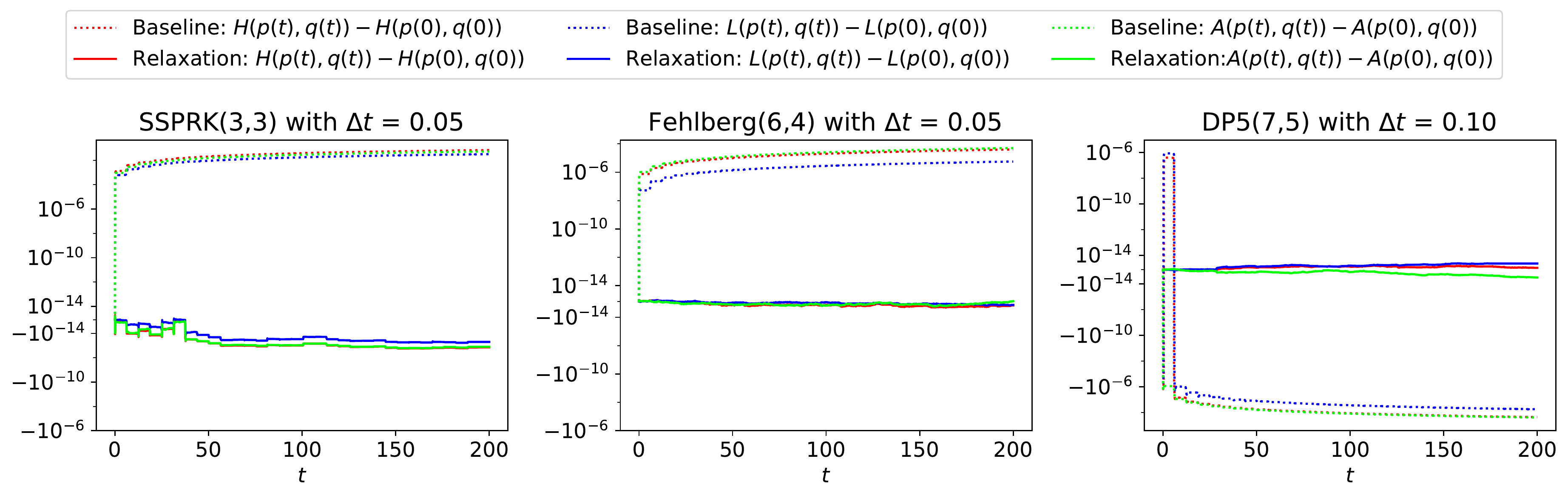}
        \caption{Change in conserved quantities \eqref{Eq:Kepler_invariants} obtained with different methods for Kepler's Two-Body problem \eqref{Eq:Kepler_two_body_problem}.}
        \label{fig:Kepler_Evolution_conserved_quantities}
    \end{figure}
     
    \begin{figure}
     \centering
     \includegraphics[width=\textwidth]{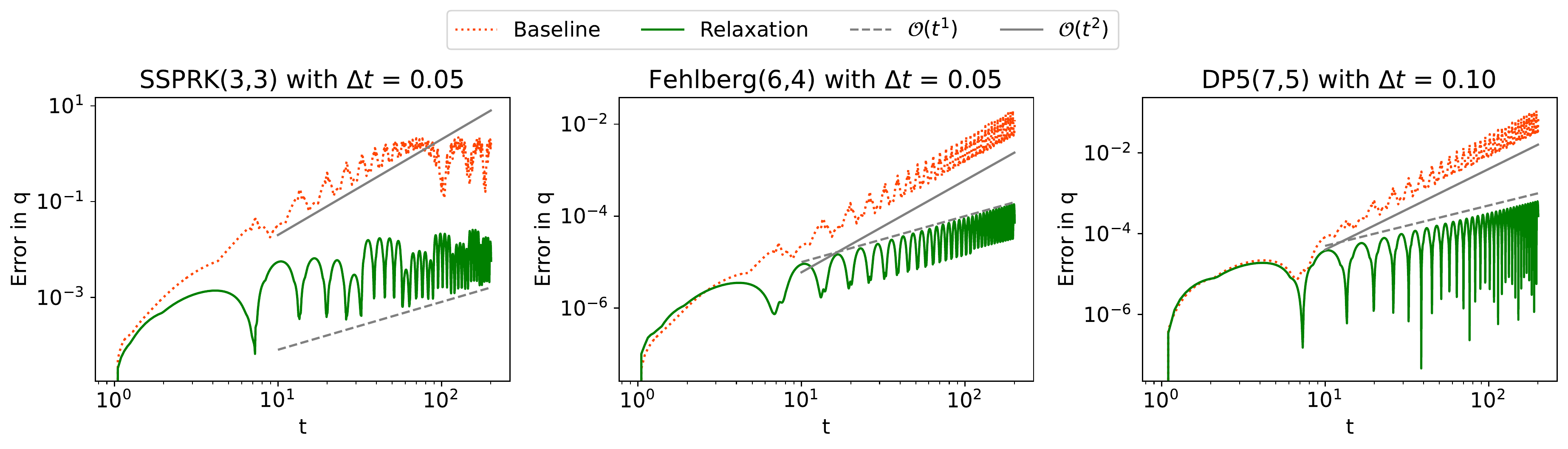}
    	\caption{Error growth over time for Kepler's Two-Body problem \eqref{Eq:Kepler_two_body_problem}.}
    	\label{fig:Kepler_error_growth}
    \end{figure}
    Together with the baseline methods, the relaxation versions of these methods now require two more "independent" embedded methods to solve the system as it has three conserved quantities. The new embedded methods are provided in the appendix ~\ref{app:butcherTableau}, each having order of accuracy one less than that of the corresponding baseline method. Figure ~\ref{fig:Kepler_Evolution_conserved_quantities} demonstrates the advantage of relaxation over baseline RK methods. All the MRRK methods conserve three quantities almost to machine precision, while the baseline RK methods conserve them to around four decimal places. The consequence of these results is reflected in the asymptotic error growth by these methods, shown in Figure ~\ref{fig:Kepler_error_growth}. It shows a quadratic error growth by the baseline methods, while the corresponding MRRK methods achieve a linear error growth over a long time.
    
    \subsubsection{Perturbed Kepler's Problem}
    The governing equations of the perturbed Kepler's problem \cite{haier2006geometric} are given by the following system
    \begin{subequations}\label{Eq:Perturbed_Kepler_problem}
        \begin{align}
            \dot{q_{1}} & = p_{1} \\
            \dot{q_{2}} & = p_{2} \\
            \dot{p_{1}} & = -\frac{q_{1}}{\left(q_{1}^2+q_{2}^2\right)^{3/2}} -\mu \frac{q_{1}}{\left(q_{1}^2+q_{2}^2\right)^{5/2}} \\
            \dot{p_{2}} & =-\frac{q_{2}}{\left(q_{1}^2+q_{2}^2\right)^{3/2}}  -\mu\frac{q_{2}}{\left(q_{1}^2+q_{2}^2\right)^{5/2}} \;,
        \end{align}
    \end{subequations}
    where $\mu$ is a small number taken as $0.005$ for our numerical studies. Previous studies of this problem show that it is important for a numerical method to preserve both the invariants
    \begin{subequations} \label{Eq:Perturbed_Kepler_invariants}
        \begin{align}
            H(q,p) & = \frac{1}{2}\left(p_{1}^2+p_{2}^2\right)-\frac{1}{\sqrt{q_{1}^2+q_{2}^2}}-\frac{\mu}{2\sqrt{\left(q_{1}^2+q_{2}^2\right)^3}}\ (\text{Hamiltonian}) \;, \\
        	L(q,p) & = q_{1}p_{2}-q_{2}p_{1}\ (\text{angular momentum}) \;,
        \end{align}
    \end{subequations}
    to capture the correct behavior of the solution. With the same initial conditions as in Kepler's two-body problem above but with a different eccentricity $e=0.6$, we solve the system using the baseline and the relaxation versions of the methods SSPRK(3,3) with $\Delta t = 0.05$, Fehlberg(6,4) with $\Delta t = 0.05$, and DP(7,5) with $\Delta t = 0.1$. The analytical solution is not available, so we instead use the dense output of Python interface class \verb|scipy.integrate.ode| with \verb|dopri5| method with the relative and absolute tolerances both equal to $10^{-16}$. The errors in both invariants and the numerical solutions of the orbits are presented in Figure ~\ref{fig:Perturbed_Kepler_Evolution_conserved_quantities} and Figure ~\ref{fig:Per_Kepler_error_growth}, respectively.

  \begin{figure}
    \centering
    \includegraphics[width=\textwidth]{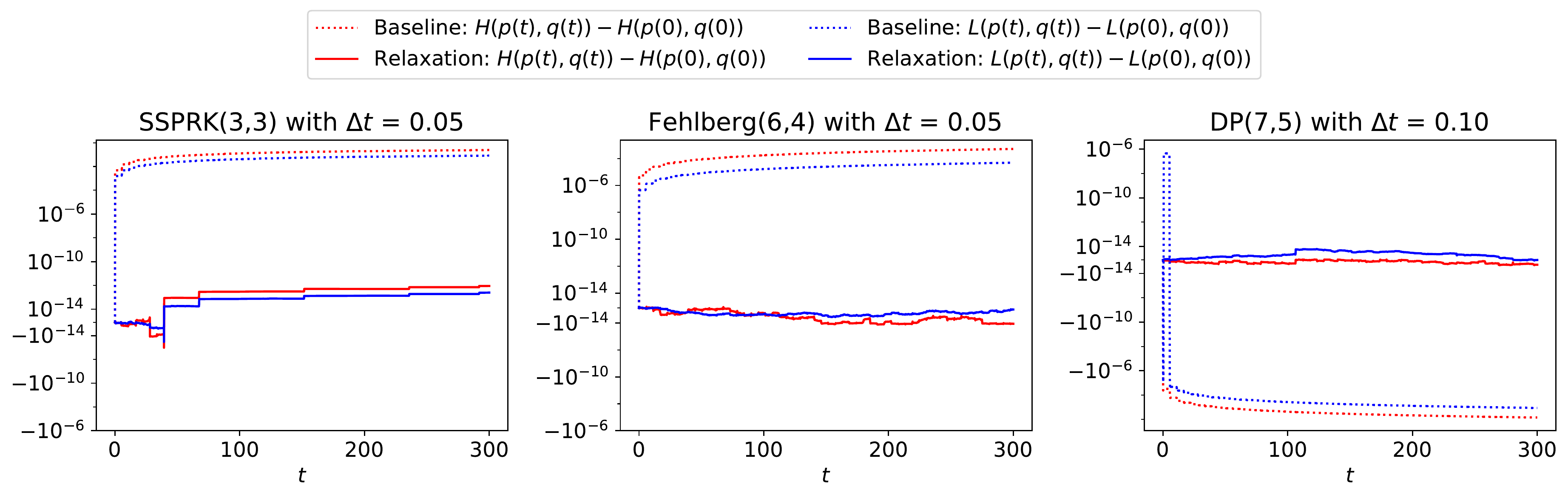}
    \caption{Changes in conserved quantities \eqref{Eq:Perturbed_Kepler_invariants} by different methods for perturbed Kepler's problem \eqref{Eq:Perturbed_Kepler_problem}.}
    \label{fig:Perturbed_Kepler_Evolution_conserved_quantities}
    \end{figure}
    
    \begin{figure}
    \centering
    \begin{subfigure}[b]{0.32\textwidth}
        \centering
        \includegraphics[width=\textwidth]{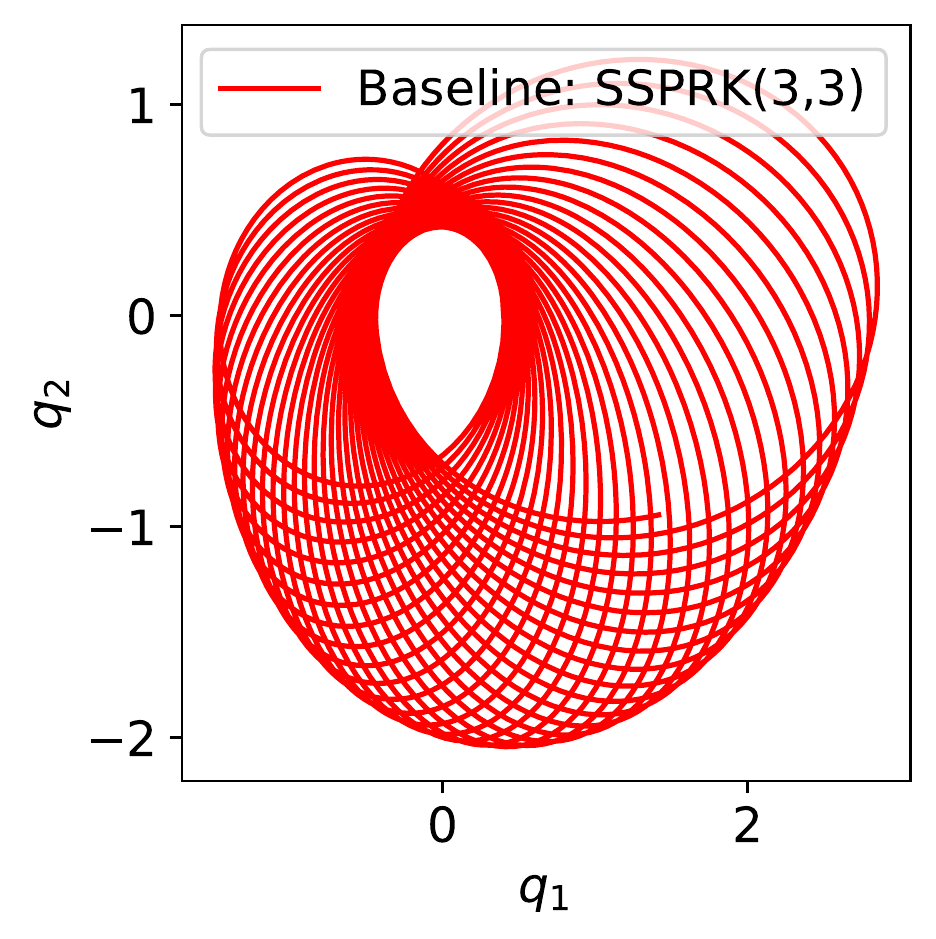}
    \end{subfigure}
    \hfill
    \begin{subfigure}[b]{0.32\textwidth}
        \centering
        \includegraphics[width=\textwidth]{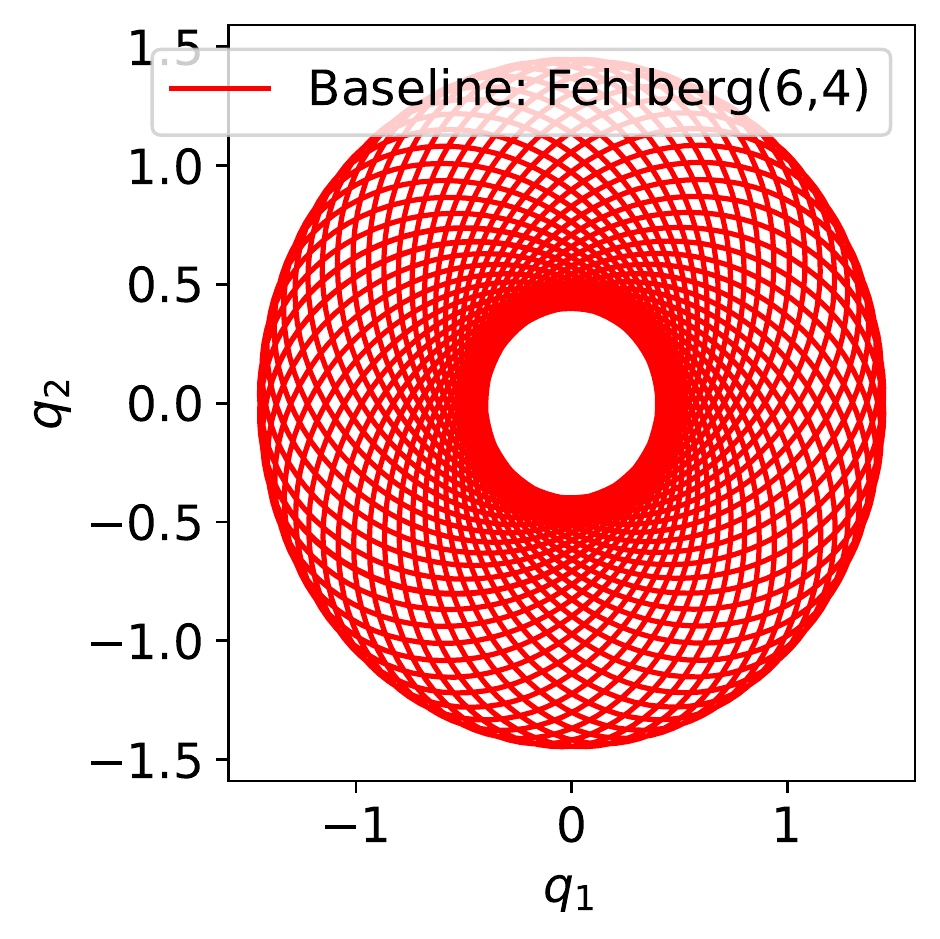}
    \end{subfigure}
    \hfill
    \begin{subfigure}[b]{0.32\textwidth}
        \centering
        \includegraphics[width=\textwidth]{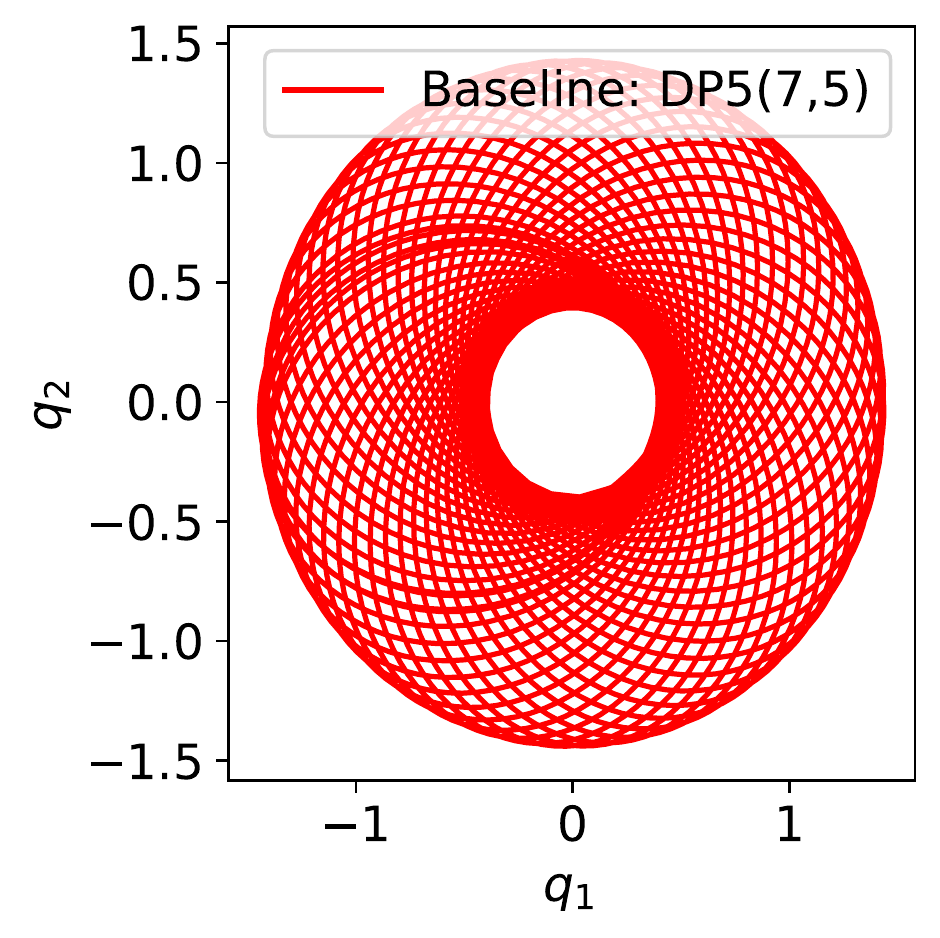}
    \end{subfigure}
    \begin{subfigure}[b]{0.32\textwidth}
        \centering
        \includegraphics[width=\textwidth]{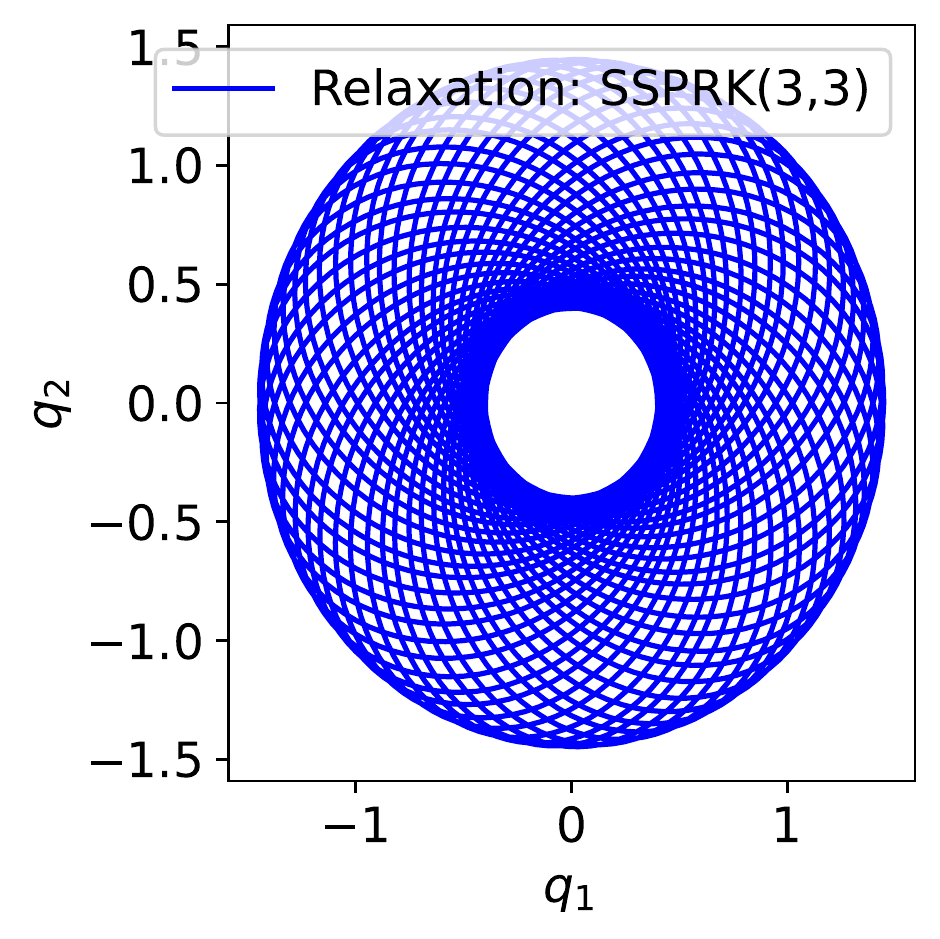}
    \end{subfigure}
    \hfill
    \begin{subfigure}[b]{0.32\textwidth}
        \centering
        \includegraphics[width=\textwidth]{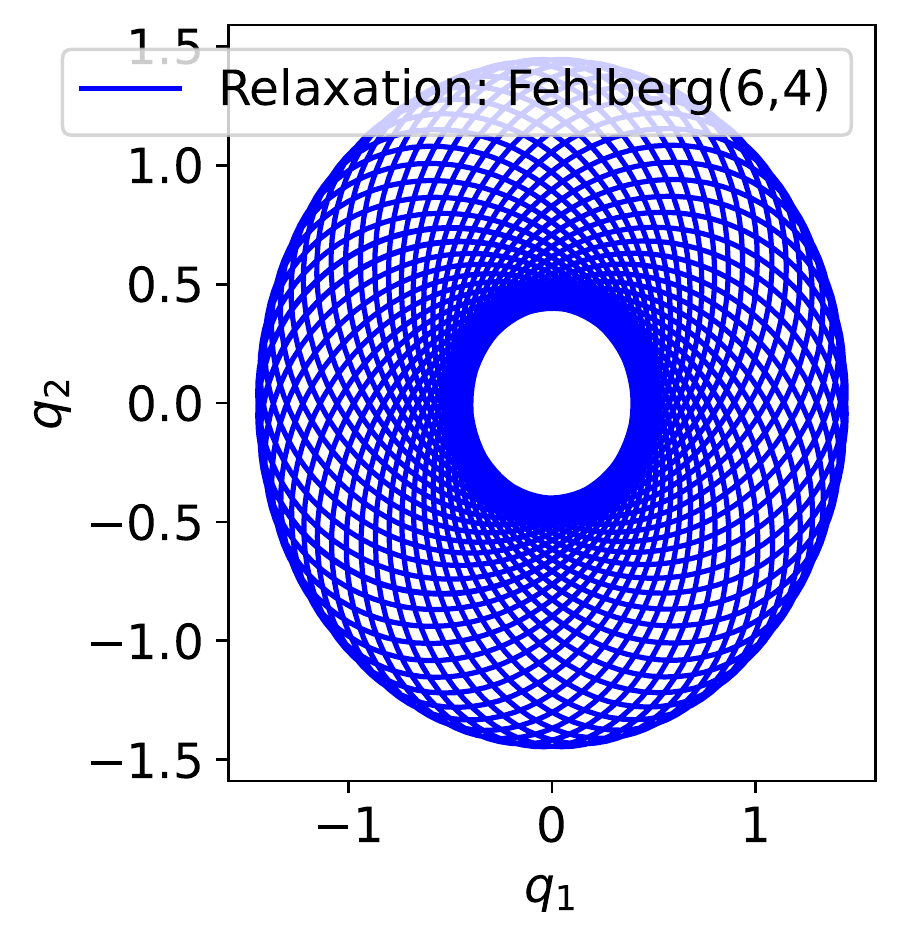}
    \end{subfigure}
    \hfill
    \begin{subfigure}[b]{0.32\textwidth}
        \centering
        \includegraphics[width=\textwidth]{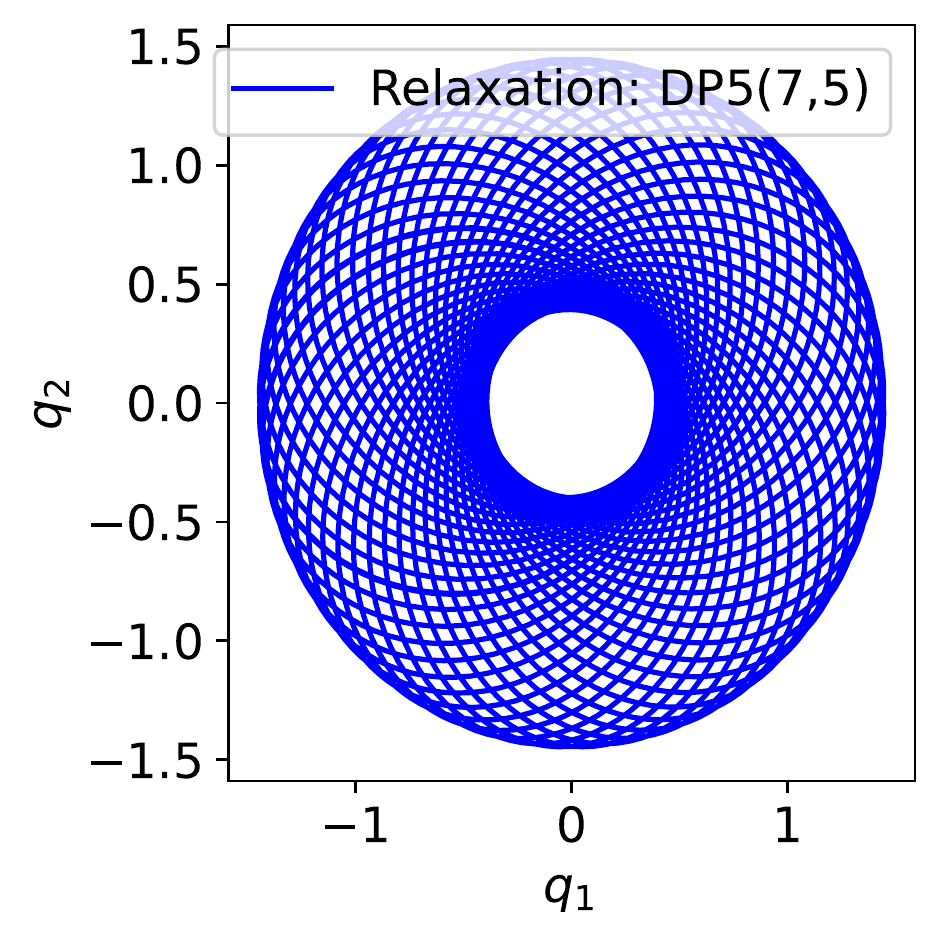}
    \end{subfigure}
    \hfill
    \begin{subfigure}[b]{0.24\textwidth}
    \centering
        \includegraphics[width=\textwidth]{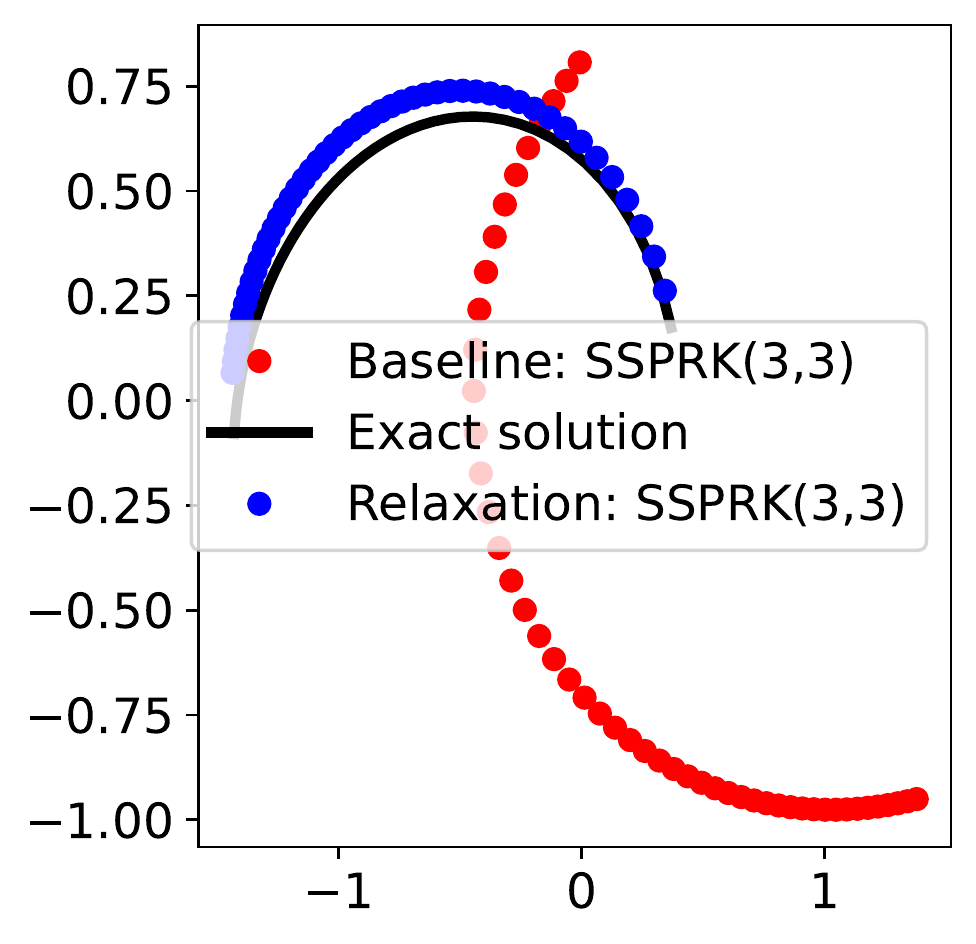}
    \end{subfigure}
    \begin{subfigure}[b]{0.24\textwidth}
    \centering
        \includegraphics[width=\textwidth]{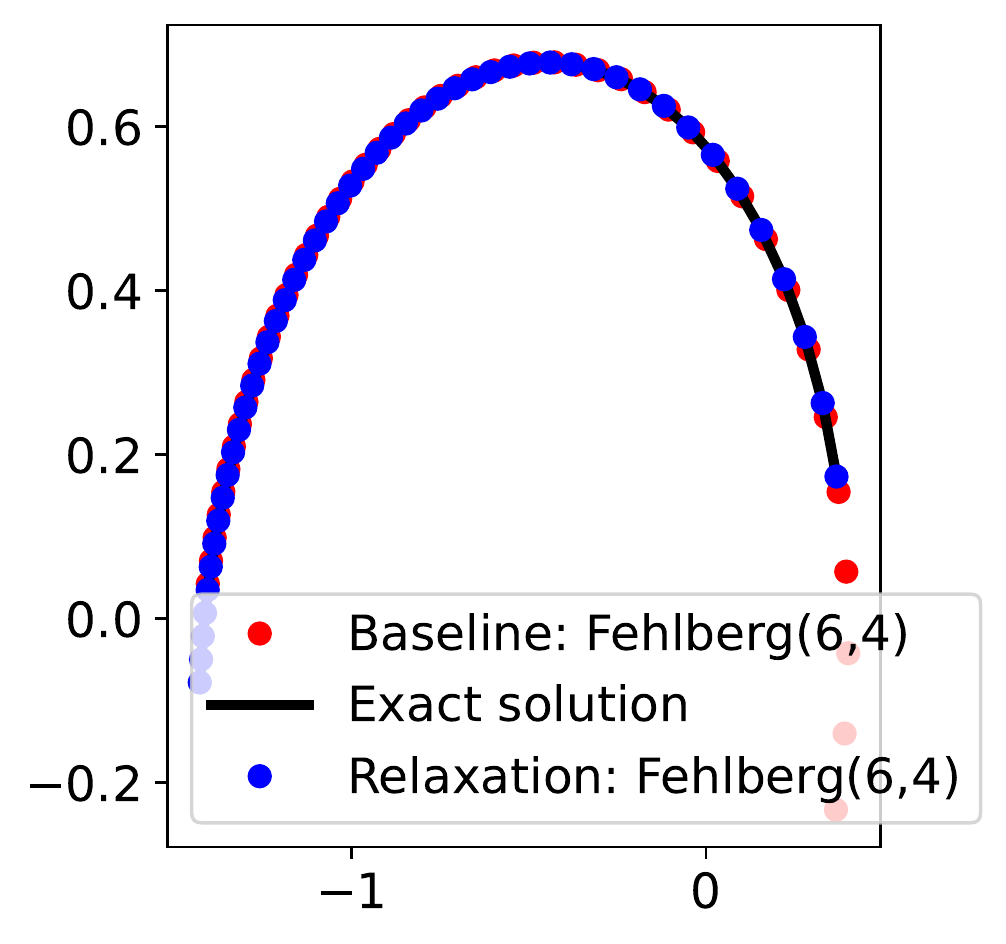}
    \end{subfigure}
    \begin{subfigure}[b]{0.24\textwidth}
    \centering
        \includegraphics[width=\textwidth]{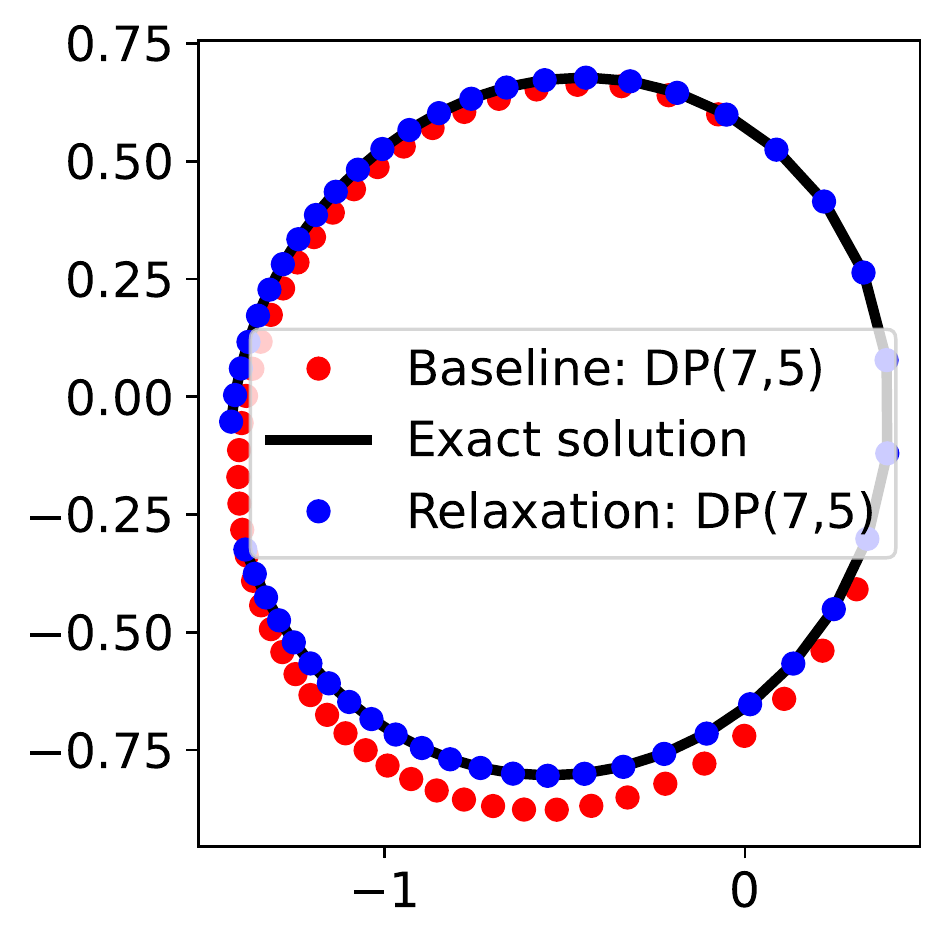}
    \end{subfigure}
    \begin{subfigure}[b]{0.24\textwidth}
    \centering
        \includegraphics[width=\textwidth]{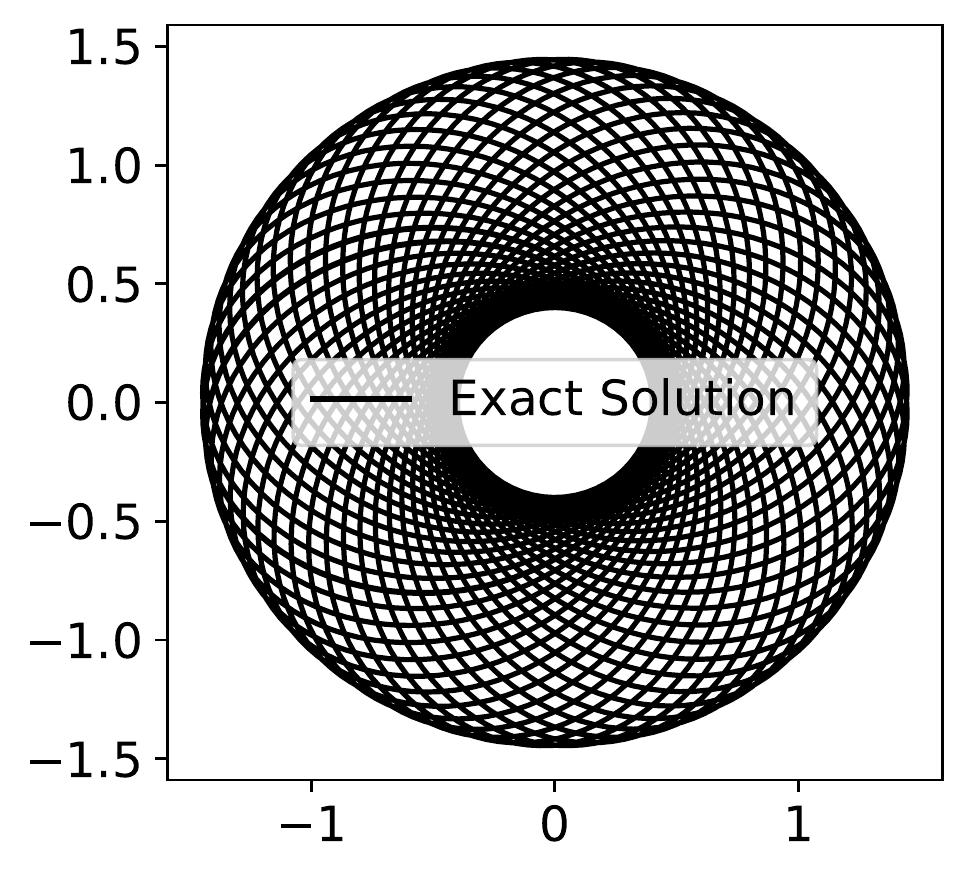}
    \end{subfigure}
    	\caption{Solution of perturbed Kepler's problem \eqref{Eq:Perturbed_Kepler_problem} by different methods. The first three plots in the last row of the figure compare the body's position by different methods at the few last steps of the simulation.}
    	\label{fig:Perturbed_Kepler_Solutions}
    \end{figure}
     Figure ~\ref{fig:Perturbed_Kepler_Evolution_conserved_quantities} shows that, in contrast with the underlying baseline methods, all the MRRK methods preserve the invariants up to the machine precision and correctly produce the elliptic orbit that precesses slowly around one of its foci (Figure ~\ref{fig:Perturbed_Kepler_Solutions}). Note that the effect of preserving the conserved quantities is only noticeable for the third-order method, where the baseline third-order method incorrectly captures the motion of orbits. Even though visually, the higher-order methods appear to produce the correct behavior of the trajectories without relaxation, in truth, these methods give completely wrong positions as time grows. As can be seen in Figure ~\ref{fig:Per_Kepler_error_growth}, the error of the baseline method increases quadratically with time until it reaches a saturation point of $100$\% error, leading to incorrect orbits of the body. The invariant-preserving relaxation approach, in contrast, results in a linear accumulation of error over time, which leads to a significantly smaller error, producing correct orbits of the body for a long time. 
     
     \begin{figure}
     \centering
         \includegraphics[width=\textwidth]{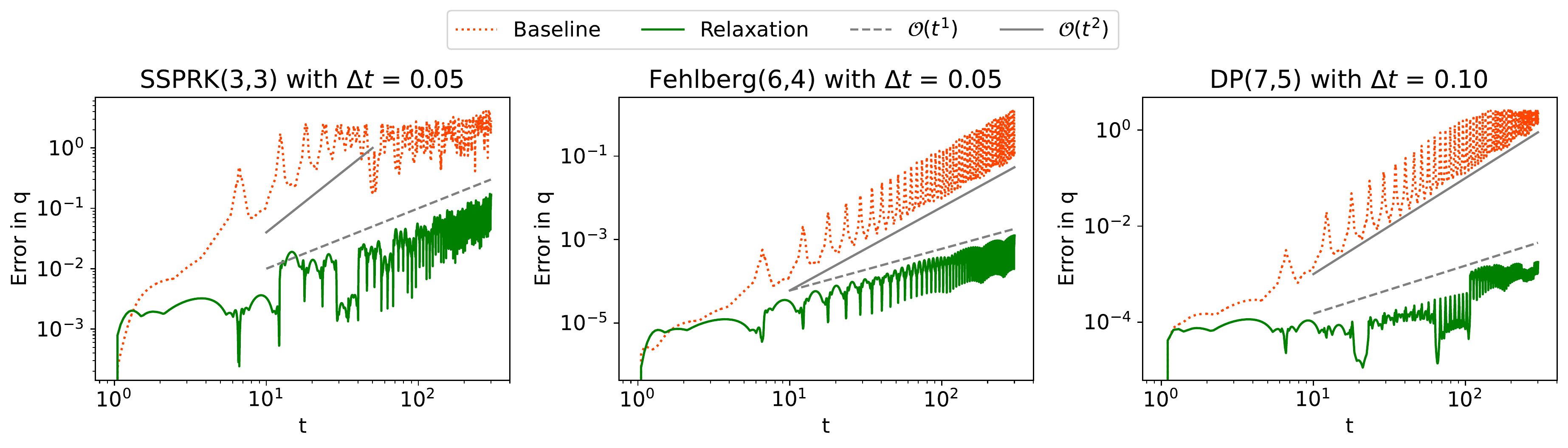}
    	\caption{Error growth over time for perturbed Kepler's problem \eqref{Eq:Perturbed_Kepler_problem}.}
    	\label{fig:Per_Kepler_error_growth}
    \end{figure}

\section{Application to the Korteweg–De Vries (KdV) equation}
\label{sec:KdV}
    Finally we consider a PDE example; namely,
    the Kortweg-de Vries (KdV) equation
    \begin{align}\label{KdV_Equation}
        u_t+6uu_x+u_{xxx} & = 0, \ \text{on} \ [x_{L},x_{R}]\times(0,T] \;,
    \end{align}
    with periodic boundary condition $u(x_L,t) = u(x_R,t)$.
    The KdV equation has infinitely many conserved quantities, of which the first three are
    \begin{subequations} \label{KdV_invariants}
        \begin{align}
            \int u \ dx \ (\text{mass}) \\
        	\int \frac{1}{2} u^2 \ dx \ (\text{energy}) \\
        	\int \left(2u^3 -u_x^2  \right) dx \ (\text{Whitham}) \;.
            \label{KdV_invariants_3}   
        \end{align}
    \end{subequations}
    It has been shown that, for the case of a 1-soliton solution, numerical methods that conserve both mass and energy give solutions whose error grows linearly in time, whereas methods that don't conserve these quantities generically yield quadratic error growth \cite{de1997accuracy}.
    In the same work, preliminary experiments with conservative methods applied to two interacting solitons also exhibited linear error growth, except during the soliton interaction.  However, there are no theoretical results except in the 1-soliton case.

    In this section we investigate the effect of conserving only the mass (which is conserved automatically by even the baseline RK methods) versus using relaxation to conserve both mass and energy, or  all three invariants \eqref{KdV_invariants}.  We consider initial data with one, two, or three solitons.
    We use relaxation to enforce the conservation of the nonlinear invariants.

    To discretize in space, we introduce an evenly-spaced grid ${x_i}$ with $x_1=x_L$ and $x_N=x_R-\Delta x$.
    We enforce semi-discrete mass and energy conservation by employing the split form spatial discretization \cite{ranocha2006broad}
    \begin{equation}\label{SBP_semi_discretization}
       \partial_{t} U+6 \times \frac{1}{3}\left(D_{1}(U \cdot U)+U \cdot\left(D_{1} U\right)\right)+D_{3} U=0,
    \end{equation}
    where $D_1, D_3$ are skew-symmetric differentiation matrices approximating the first- and third-derivative operators, respectively (here we use Fourier spectral differentiation matrices),
    and $U_i(t) \approx u(x_i,t)$.  
    In \eqref{SBP_semi_discretization} the dot $\cdot$ denotes element-wise multiplication.
    This guarantees semi-discrete conservation of the discrete mass and energy
        \begin{subequations} \label{KdV_discrete_inv_mass_energy}
        \begin{align}
            \eta_0 & := \Delta x \sum_{j}U_j \;, \\
            \eta_1 & := \Delta x \sum_{j}|U_j|^2,
        \end{align}
    \end{subequations}
    (up to rounding errors), regardless of the grid spacing $\Delta x$.  However, the third invariant is conserved only to the level of the spatial truncation error.  This can be made small by using a fine grid, and will be discussed further in Section \ref{sec:conserve3}.

    The semi-discretization \eqref{SBP_semi_discretization} is stiff, since $\|D_3\| = {\mathcal O}(\Delta x^{-3})$.  We therefore make use of ImEx RK schemes in time, handling the stiff linear term implicitly and the nonlinear term explicitly.  We use two ImEx schemes from \cite[Appendix C]{kennedy2003additive}, which were introduced already at the beginning of Section \ref{sec:numerical}.
    
    \subsection{Conservation of mass and energy}
    In this section we investigate how the conservation of mass and energy affects the temporal error growth, relative to methods that conserve only the mass.  We consider 3 different initial conditions, consisting of one, two, or three solitons, as detailed in Appendix \ref{app:soliton-solutions}.

    \subsubsection{One soliton}
    We first consider the 1-soliton solution ((a) in appendix \ref{app:soliton-solutions}) on the domain $[x_{L},x_{R}]=[-20,60]$ of length $L = 80$ and $N=512$ spatial grid points and integrate from $t=0$ to $t=20$. Table ~\ref{Table:1_invariant_error} displays the maximum deviation of each invariant compared to its initial value, confirming that mass is conserved in all cases, while energy is conserved only by the MRRK methods.
    It is interesting to note that enforcing conservation of $\eta_1$ also greatly reduces the amount of variation in the third invariant \eqref{KdV_invariants_3}, shown in the last column (the discrete approximation used to compute this invariant is given in \eqref{discrete_Whitham}).
    In Figure ~\ref{fig:KdV_1_sol_1_inv_GlobalErrors}, we plot the global error as a function of time. The errors behave linearly for RK methods with relaxation and quadratically without relaxation. These numerical results agree with the analytical results presented in \cite{de1997accuracy}, i.e., in the case of 1-soliton solution of the KdV equation, the errors incurred by methods conserving mass and energy grow linearly as opposed to quadratic growth by non-conservative methods.
    \begin{table}
    \centering
    \resizebox{15cm}{!}{%
    \begin{tabular}{ | c | l | c | c | c |}
        \hline
               &    &          \multicolumn{3}{c|}{Maximum changes in invariants}\\
        \hline
        \parbox{10em}{Mass and energy \\ conservative semi- \\ discretization with} & Methods & Mass & Energy & Whitham  \\
        \hline
        \multirow{4}*{One soliton} & Baseline ARK3(2)4L[2]SA  & $1.33e-15$ & $5.38e-02$ & $2.11e-01$ \\
        \cline{2-5}
                             & Relaxation ARK3(2)4L[2]SA  & $2.22e-15$ & $1.33e-15$ &  $6.56e-04$ \\
        \cline{2-5}
                             & Baseline ARK4(3)6L[2]SA  & $8.88e-16$ & $1.05e-02$ & $4.21e-02$ \\
        \cline{2-5}
                             & Relaxation ARK4(3)6L[2]SA  & $8.88e-16$ & $1.55e-15$ & $9.84e-05$ \\
        \hline
        \multirow{4}*{Two soliton} & Baseline ARK3(2)4L[2]SA  & $2.66e-15$ & $1.10e-01$ & $4.20e-01$ \\
        \cline{2-5}
                             & Relaxation ARK3(2)4L[2]SA  & $2.66e-15$ & $2.66e-15$ & $7.40e-03$\\
        \cline{2-5}
                             & Baseline ARK4(3)6L[2]SA  & $2.66e-15$ & $2.32e-02$ & $9.16e-02$ \\
        \cline{2-5}
                             & Relaxation ARK4(3)6L[2]SA  & $2.66e-15$ & $5.33e-15$ &  $1.92e-03$\\
        \hline 
        \multirow{4}*{Three soliton} & Baseline ARK3(2)4L[2]SA  & $2.66e-15$ & $2.07e-01$ & $7.45e-01$  \\
        \cline{2-5}
                             & Relaxation ARK3(2)4L[2]SA  & $6.22e-15$ & $4.22e-15$ & $3.10e-02$  \\
        \cline{2-5}
                             & Baseline ARK4(3)6L[2]SA  & $3.55e-15$ & $4.62e-02$ & $1.79e-01$ \\
        \cline{2-5}
                             & Relaxation ARK4(3)6L[2]SA  & $3.55e-15$ & $3.55e-15$ &  $9.64e-03$\\
        \hline
    \end{tabular}%
    }
    \caption{Maximum changes in invariants \eqref{KdV_discrete_inv_mass_energy} and \eqref{discrete_Whitham} by different methods applied to a mass and energy conserving semi-discretized KdV equation with different n-solitons.}
    \label{Table:1_invariant_error}
    \end{table}
    
    \begin{figure}
    \centering
    \includegraphics[width=\textwidth]{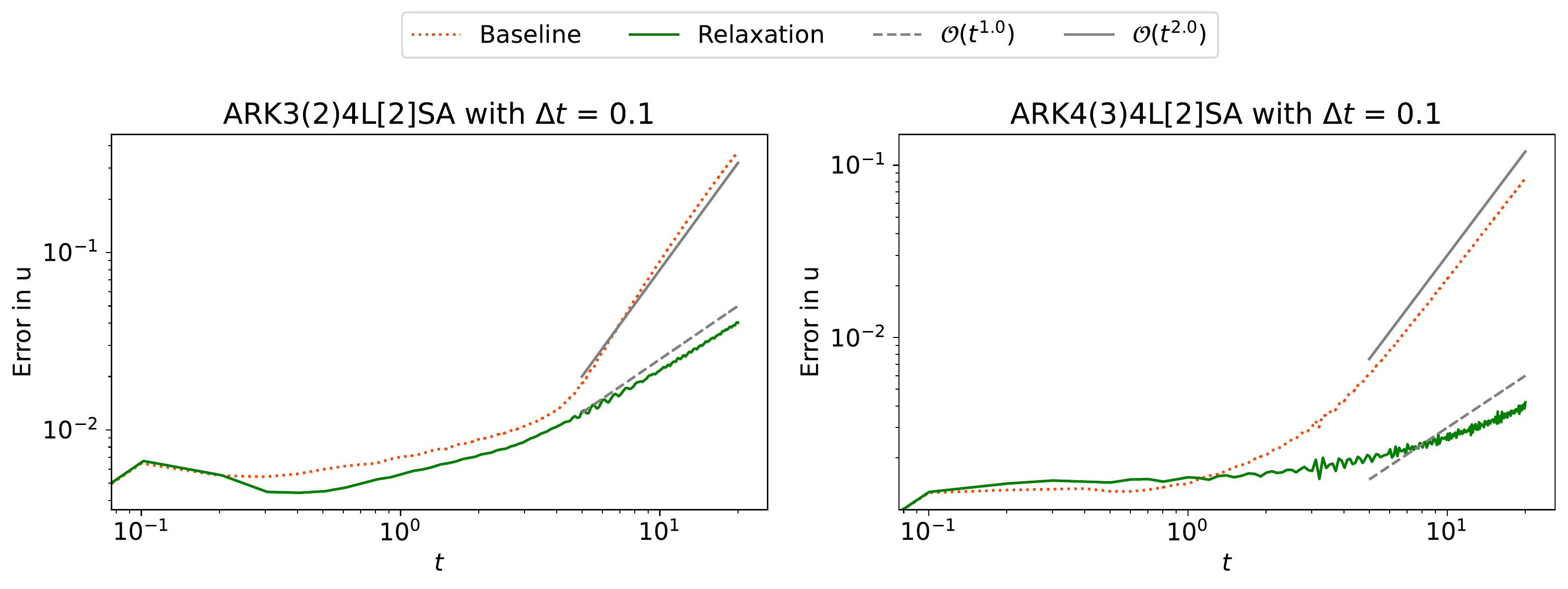}
	\caption{Error growth over time for a 1-soliton solution of the KdV equation.  Relaxation is used to enforce conservation of $\eta_1$.}
	\label{fig:KdV_1_sol_1_inv_GlobalErrors}
    \end{figure}
    
    \subsubsection{Two solitons}
    Next, we consider a 2-soliton solution over the region $[x_{L},x_{R}]=[-80,80]$ and use the semi-discretization \eqref{SBP_semi_discretization} with $N=1024$ spatial grid points, integrating from $t=-25$ to $t=25$. 
    The error growth for the resulting solutions is presented in Figure ~\ref{fig:KdV_2_sol_1_inv_GlobalErrors}.  In this case, there are no available theoretical results to guarantee the error growth of conservative methods will be better than for non-conservative methods.  For the two relaxation methods employed here we see markedly different behaviors; one method exhibits sublinear growth, while the other exhibits something between linear and quadratic growth at long times.  In both cases, the conservative (relaxation) methods provide solutions that are drastically more accurate compared to the non-conservative counterparts, which exhibit the expected quadratic error growth.  All methods exhibit a dip in the error during the soliton interaction, as was observed in \cite{de1997accuracy}.
     
     \subsubsection{Three solitons}
     Finally, we consider the case of a 3-soliton solution on the domain $[x_{L},x_{R}]=[-130,130]$ with $N=1536$ spatial grid points, integrated from $t=-50$ to $t=50$. Figure ~\ref{fig:KdV_3_sol_1_inv_GlobalErrors} shows the errors over time. In this case, the conservative (relaxation) approach results in quadratic growth of errors similar to baseline ImEx methods, although the conservative solutions still have much smaller errors.  We examined the structure of the errors in this case and found that although the total energy is conserved, the energy of individual solitons changes linearly over time, leading to  quadratically growing phase errors for all three solitons.  It is unclear why the 2-soliton case does not exhibit a similar effect.
     
    \begin{figure}
    \centering
    \includegraphics[width=\textwidth]{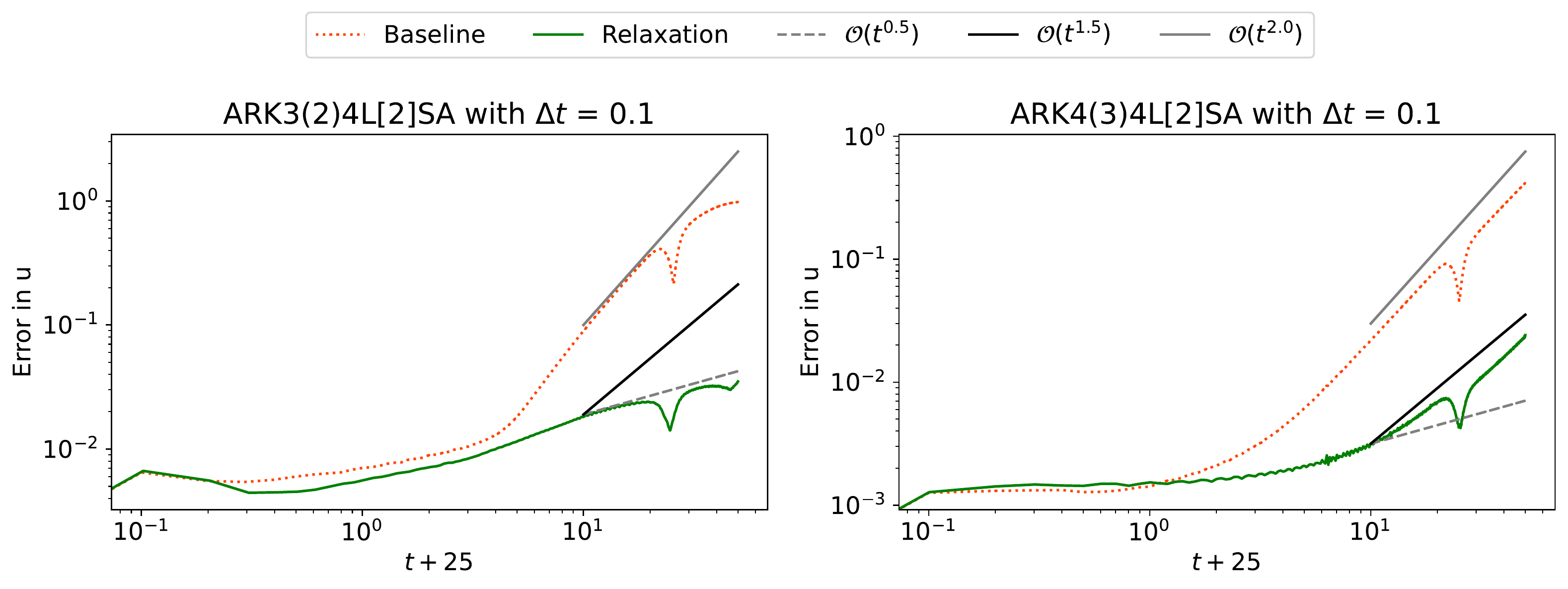}
	\caption{Error growth over time for a 2-soliton solution of the KdV equation.  Relaxation is used to enforce conservation of $\eta_1$.}
	\label{fig:KdV_2_sol_1_inv_GlobalErrors}
    \end{figure}
    
    \begin{figure}
    \centering
    \includegraphics[width=\textwidth]{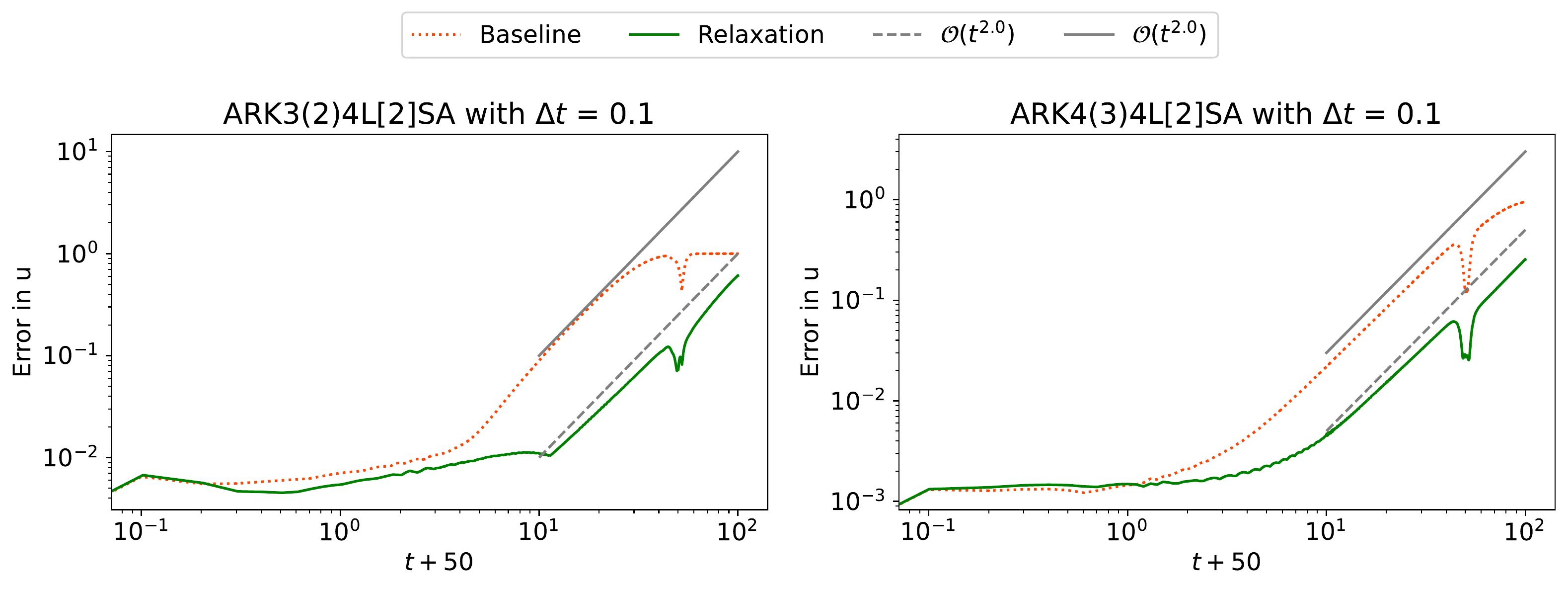}
	\caption{Error growth over time for a 3-soliton solution of the KdV equation.  Relaxation is used to enforce conservation of $\eta_1$.}
	\label{fig:KdV_3_sol_1_inv_GlobalErrors}
    \end{figure}
   
     \subsection{Multiple relaxation: attempting to restore conservation through relaxation}\label{sec:conserve3}
    We have seen that conserving only two invariant quantities (mass and energy) is not generally sufficient to produce linear error growth for the 2- and 3-soliton solutions.  It is natural to ask whether conserving a third invariant will improve this. 
    
    Since we do not have a semi-discretization that preserves discrete analogs of all three invariants simultaneously, we instead attempt to use a sufficiently fine spatial grid (still with the pseudospectral spatial discretization \eqref{SBP_semi_discretization}) in order to make the overall spatial error small, so that the semi-discrete error in the conservation of $\eta_2$ will also be as small as possible.
     We introduce the discrete approximation to the third invariant
    \begin{align}\label{discrete_Whitham}
        \eta_2 := 2\frac{\Delta x}{3}\left[U_{1}^{3}+\sum_{j=2}^{N-1}\left(4U_{j}^{3}+2U_{j+1}^{3}\right)+U_{N+1}^{3}\right]-\frac{\Delta x}{3}\left[{V}_{1}^{2}+\sum_{j=2}^{N-1}\left(4{V}_{j}^{2}+2{V}_{j+1}^{2}\right)+{V}_{N+1}^{2}\right] \;,
    \end{align}
    where $V:=D_1 U \approx u_x$, the numerical derivative is computed using Fourier transformation, and the integrals $\int u^3 dx$ and $\int u_{x}^{2} dx$ are approximated using Simpson's quadrature rule applied to the function $U^3$ and $(D_1 U)^{2}$, respectively. We test how well the exact solution conserves this quantity as we refine the spatial grid, by computing
    $$
    \max_t\|\eta_2'(t)\|_\infty
    $$
    for the exact solution over the time interval of interest.
    The minimum achievable value is about $10^{-11}$, which is obtained with
    $N = 1024$ grid points for a 2-soliton solution on the domain $[-80,80]$ and $N = 1536$ grid points for a 3-soliton solution on the domain $[-130,130]$.
    

    \subsubsection{Solution of the relaxation equations for a non-conservative system}
    Since $\eta_2$ is not exactly conserved by the true semi-discrete solution, we have no theoretical guarantee of the existence of a solution of the relaxation equations \eqref{relaxation-equations}.  In fact, this represents an interesting potential application of the relaxation technique -- if conservation is lost in the process of semi-discretization, can we restore it through time discretization, and what effect will that have?  The results below represent a first exploration of this question, which might serve as a starting point for further work.

    We find that at many timesteps, the \verb|fsolve| routine from \verb|scipy.optimize| gives a solution
    accurate to significantly less than double precision, consistent with our estimates of the accuracy of conservation for the semi-discrete scheme.  In order to ensure that we obtain the most accurate solution possible, we do an additional search using numerical optimization routines from \verb|scipy.optimize| when the solution from \verb|fsolve| is inaccurate.

    \subsubsection{Results}
    Figures ~\ref{fig:KdV_2_sol_2_inv_InvariantErrors} and ~\ref{fig:KdV_3_sol_2_inv_InvariantErrors} show the deviation of the invariants over time for the 2-soliton and 3-soliton cases, respectively.  Relaxation methods improve the error in invariants, but they are not up to the machine accuracy.

    \begin{figure}
    \centering
    \includegraphics[width=\textwidth]{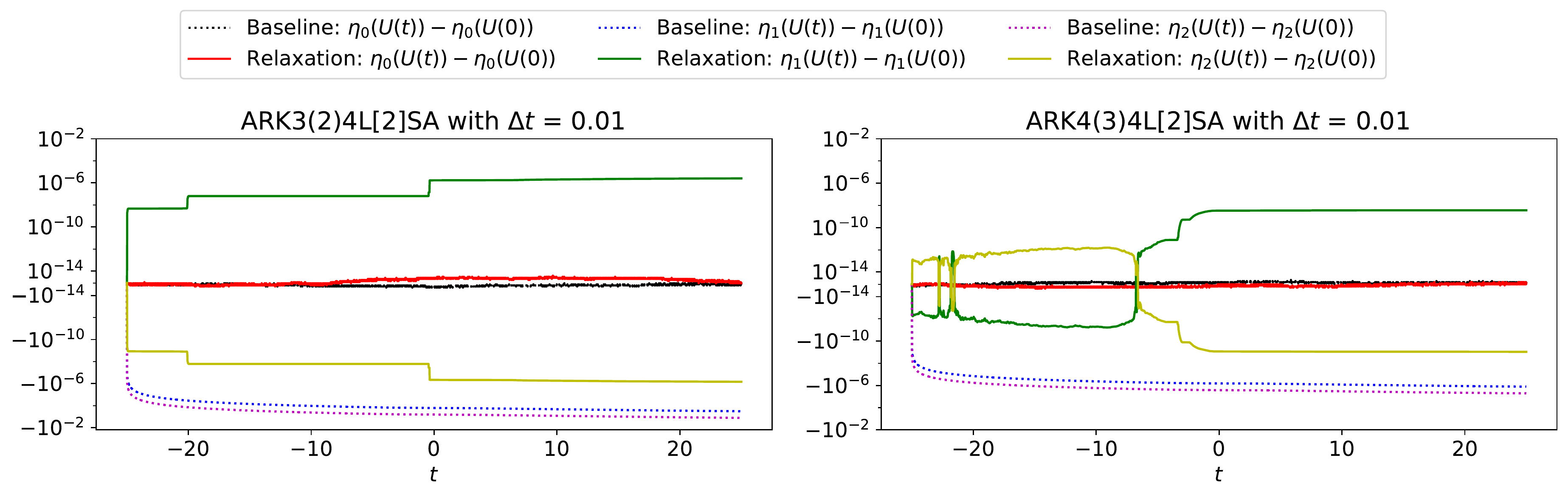}
	\caption{Deviation of each invariant from its initial value for a 2-soliton solution of the KdV equation.  Relaxation is used to (attempt to) enforce conservation of $\eta_1$ and $\eta_2$.}
	\label{fig:KdV_2_sol_2_inv_InvariantErrors}
    \end{figure}

    \begin{figure}
    \centering
    \includegraphics[width=\textwidth]{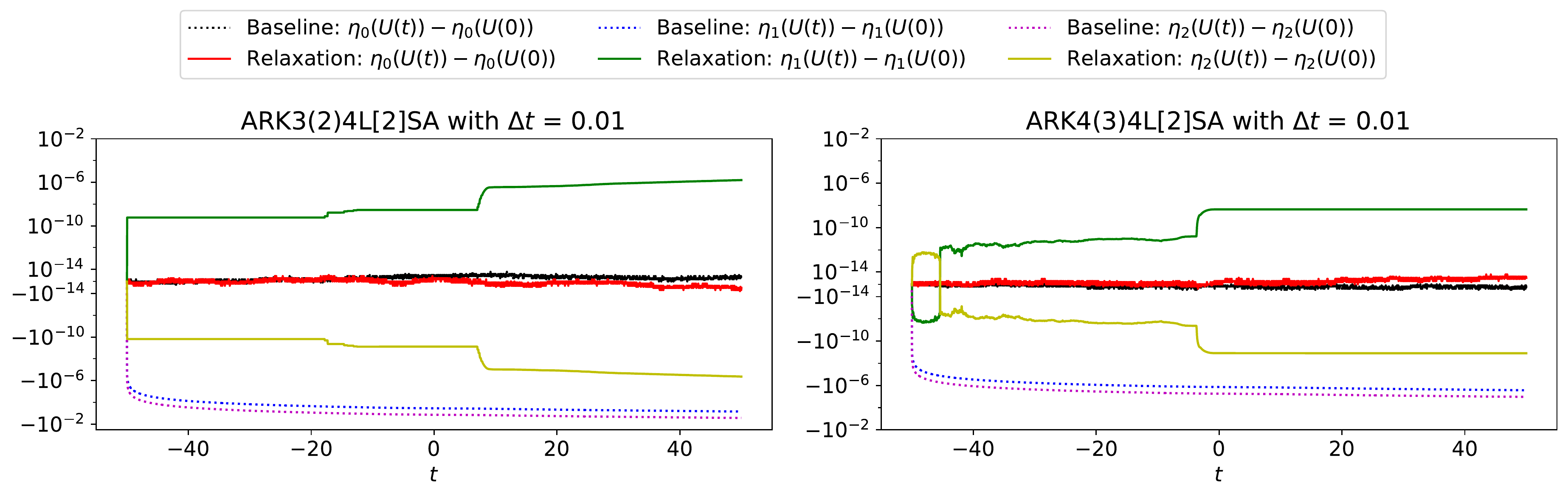}
	\caption{Deviation of each invariant from its initial value for a 3-soliton solution of the KdV equation.  Relaxation is used to (attempt to) enforce conservation of $\eta_1$ and $\eta_2$.}
	\label{fig:KdV_3_sol_2_inv_InvariantErrors}
    \end{figure}
    The error growth behavior for these examples is plotted in Figures \ref{fig:KdV_2_sol_2_inv_GlobalErrors} and \ref{fig:KdV_3_sol_2_inv_GlobalErrors}.  We observe that relaxation may yield no improvement or even degrade the accuracy of the solution over short times, but provides a noticeable improvement in accuracy over sufficiently long times.  Interestingly, we observe roughly linear error growth at long times, although there is significant jitter, probably due to the lack of an exact solution to the relaxation equations.
    
    \begin{figure}
    \centering
    \includegraphics[width=\textwidth]{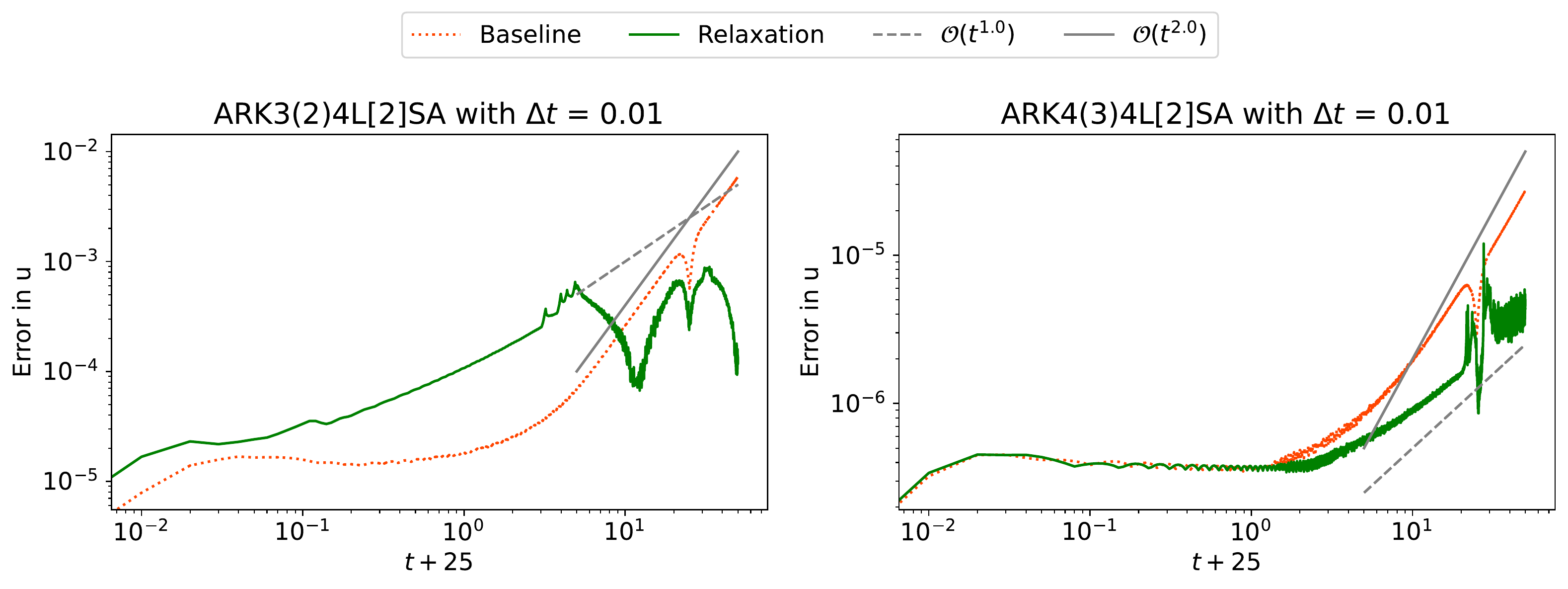}
	\caption{Error growth over time for a 2-soliton solution of the KdV equation.  Relaxation is used to enforce conservation of $\eta_1$ and $\eta_2$ as nearly as possible.}
	\label{fig:KdV_2_sol_2_inv_GlobalErrors}
    \end{figure}

    \begin{figure}
    \centering
    \includegraphics[width=\textwidth]{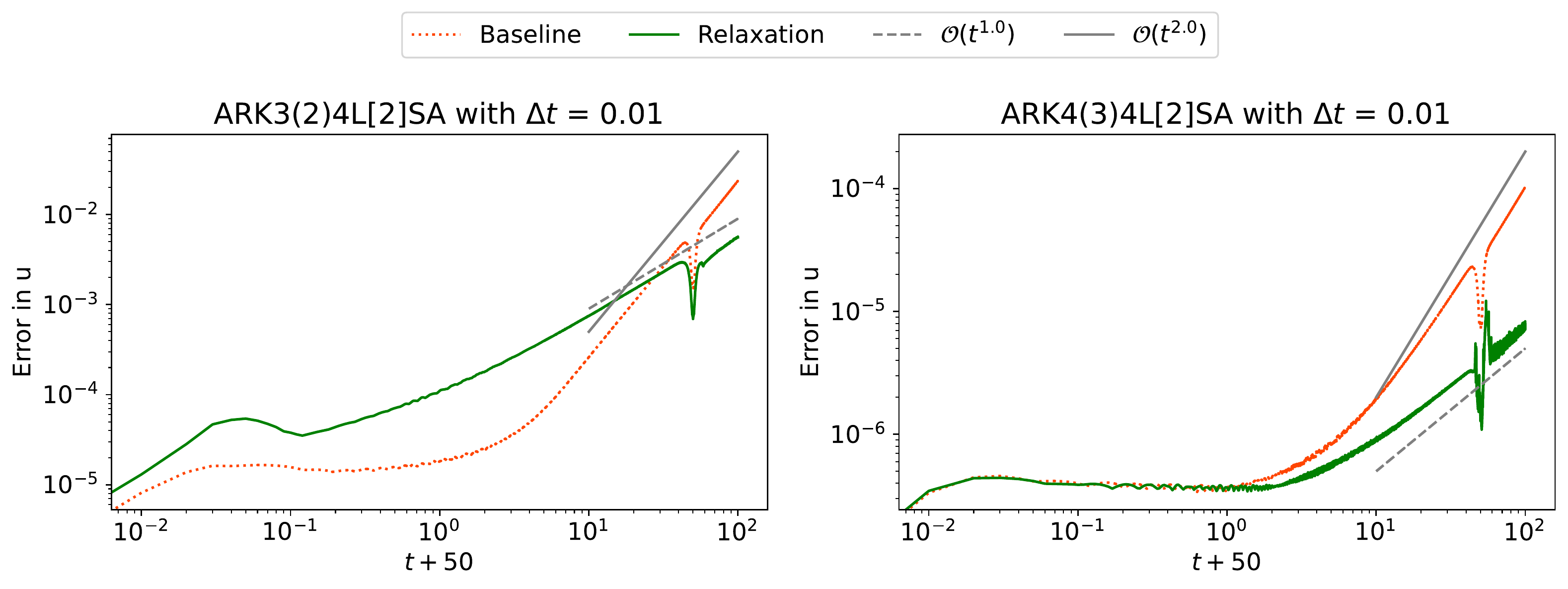}
	\caption{Error growth over time for a 3-soliton solution of the KdV equation.  Relaxation is used to enforce conservation of $\eta_1$ and $\eta_2$ as nearly as possible.}
	\label{fig:KdV_3_sol_2_inv_GlobalErrors}
    \end{figure}

\section{Conclusions}
    In this work, we propose a generalization of the relaxation framework for RK methods to preserve multiple nonlinear conserved quantities of a dynamical system. We prove the existence of the relaxation parameters and the accuracy of the generalized relaxation methods under some conditions. 
    We also demonstrate for the first time the application of relaxation in combination with additive (ImEx) RK methods for stiff problems.
    Numerical results indicate that multiple-relaxation RK methods can conserve multiple conserved quantities and produce qualitatively better numerical solutions for conservative ODE and PDE dynamical systems. 

    An important case study of our numerical experiments is the KdV equation with multi-soliton solutions. With a conservative semi-discretization of the KdV equation preserving mass and energy, the relaxation approach applied with ARK methods successfully preserves the invariants for 1, 2, and 3-soliton solutions. We observe that mass and energy preservation do not necessarily guarantee linear error growth for multi-soliton solutions for the KdV equation. Some numerical results applying relaxation methods to a non-conservative semi-discretization (enforcing conservation of a third nonlinear quantity that is conserved by the PDE but not the spatial discretization) of the KdV equation are presented. Numerical results suggest that relaxation methods can be advantageous if we are interested in long time numerical solutions. 

    The application of this general relaxation framework to the nonlinear Schr\"{o}dinger equation with multiple nonlinear invariants is a subject of ongoing research. Another possible research direction is extending the generalized relaxation approach framework to the other class of time integration schemes, such as linear multistep methods, and improving the underlying methods' numerical performance.  Additionally, multiple relaxation could be applied to systems with multiple dissipated functionals, or with some conserved and some dissipated functionals.
\appendix
\section{List of RK Methods} 
\label{app:butcherTableau}

\begin{table}[H]
    \centering
    \begin{adjustbox}{angle=0}
    \resizebox{2.5cm}{!}{%
    	\begin{tabular}{l|l l}
             0  &    0 \\                        
             1  &    1  &   0  \\ 
             \hline
            $\vec{b}^{\,1}$  &   1/2  &   1/2 \\
             \hline
            $\vec{b}^{\,2}$  &   1/3  &   2/3	 				   			 
    	\end{tabular}%
    	}%
    \end{adjustbox}
    \caption{SSPRK(2,2): Second-order method $(A,\vec{b}^{\,1})$ with a first-order embedded method $(A,\vec{b}^{\,2})$.}
    \label{tab:BT_ssprk22}
\end{table}

\begin{table}[H]
    \centering
    \begin{adjustbox}{angle=0}
    \resizebox{10cm}{!}{%
    	\begin{tabular}{l|l l l}
             0  &    0 \\                        
             1  &    1  &   0  \\ 
             1/2 &          1/4   &        1/4 & 0 \\ 
             \hline
            $\vec{b}^{\,1}$  &   1/6  &          1/6    &        2/3   \\
             \hline
            $\vec{b}^{\,2}$  &   0.291485418878409 & 0.291485418878409 & 0.417029162243181	\\
            \hline
            $\vec{b}^{\,3}$  & 0.395011932394815 & 0.395011932394815 & 0.209976135210371
    	\end{tabular}%
    	}%
    \end{adjustbox}
    \caption{SSPRK(3,3): Third-order method $(A,\vec{b}^{\,1})$ with $(A,\vec{b}^{\,2})$ and $(A,\vec{b}^{\,3})$ as second-order embedded methods.}
    \label{tab:BT_ssprk33}
\end{table}

\begin{table}[H]
    \centering
    \begin{adjustbox}{angle=0}
    \resizebox{10cm}{!}{%
    	\begin{tabular}{l|l l l}
             0  &    0 \\                        
             1/3  &    1/3  &   0  \\ 
             2/3 &      0   &    2/3 & 0 \\ 
             \hline
            $\vec{b}^{\,1}$  &   1/4  &          0   &        1/4   \\
             \hline
            $\vec{b}^{\,2}$  &   0.006419303047187  & 0.487161393905626 &  0.506419303047187	 				   			 
    	\end{tabular}%
    	}%
    \end{adjustbox}
    \caption{Heun(3,3): Third-order method $(A,\vec{b}^{\,1})$ with a second-order embedded method $(A,\vec{b}^{\,2})$.}
    \label{tab:BT_heun33}
\end{table}

\begin{table}[H]
    \centering
    \begin{adjustbox}{angle=0}
    \resizebox{4cm}{!}{%
    	\begin{tabular}{l|l l l l}
             0  &    0 \\                        
             1/2  &    1/2  &   0  \\ 
             1/2 &      0   &    1/2 & 0 \\ 
             1   &      0   &    0   &     1      &    0  \\
             \hline
            $\vec{b}^{\,1}$  &   1/6   &        1/3    &       1/3      &     1/6  \\
             \hline
            $\vec{b}^{\,2}$  &   1/4   &        1/4      &     1/4     &      1/4	 				   			 
    	\end{tabular}%
    	}%
    \end{adjustbox}
    \caption{RK(4,4): Fourth-order method $(A,\vec{b}^{\,1})$ with a second-order embedded method $(A,\vec{b}^{\,2})$.}
    \label{tab:BT_rk44}
\end{table}

\begin{table}[H]
    \centering
    \begin{adjustbox}{angle=0}
    \resizebox{18cm}{!}{%
    	\begin{tabular}{l|l l l l l l}
             0  &    0 \\                        
             1/4 &          1/4  &          0   \\ 
             3/8 &	3/32 &	9/32 & 0 \\ 
             12/13	& 1932/2197 & 	-7200/2197 &	7296/2197 &   0\\
             	1 &	439/216 &	-8 &	3680/513 &	-845/4104 &    0\\
              1/2 &	-8/27  &	2 &	-3544/2565 &	1859/4104 &	-11/40 &	    0\\
             \hline
             $\vec{b}^{\,1}$  & 16/135	& 0	& 6656/12825& 28561/56430	& -9/50 &	2/55 \\
             \hline
            $\vec{b}^{\,2}$  &  25/216 &	0 &	1408/2565 &	2197/4104  &	-1/5  &	0   \\
             \hline
            $\vec{b}^{\,3}$  &   0.122702088570621 &   0.000000000000003 &  0.251243531398616 & -0.072328563385151 &  0.246714063515406 &  0.451668879900505 \\
            \hline
            $\vec{b}^{\,4}$  & 0.150593325320835 & 0.000000000000003 & 0.275657325006399 & 0.414789231909538 & -0.131467847351019 & 0.290427965114243
    	\end{tabular}%
    	}%
    \end{adjustbox}
    \caption{Fehlberg(6,4): Fifth-order method $(A,\vec{b}^{\,1})$ with a fourth-order embedded method $(A,\vec{b}^{\,2})$ , and $(A,\vec{b}^{\,3})$ and $(A,\vec{b}^{\,4})$ as third-order embedded methods.}
    \label{tab:BT_fehlberg54}
\end{table}

\begin{table}[H]
    \centering
    \begin{adjustbox}{angle=0}
    \resizebox{17cm}{!}{%
    	\begin{tabular}{l|l l l l l l l}
             0  &    0 \\                        
             1/5 &          1/5  &          0   \\ 
            3/10 & 	3/40 & 	9/40 & 0 \\ 
             4/5 & 	44/45 & 	-56/15 & 	32/9 &   0\\
            8/9	&  19372/6561 & 	-25360/2187 & 	64448/6561 & 	-212/729 &    0\\
            1	& 9017/3168  & 	-355/33	 &  46732/5247 & 	49/176 & 	-5103/18656 &	    0\\
            1	&  35/384  & 	0  & 	500/1113  & 	125/192 & 	-2187/6784 & 	11/84  &    0\\
             \hline
            $\vec{b}^{\,1}$  &  35/384 & 	0  & 	500/1113  & 	125/192  & 	-2187/6784  & 	11/84  & 	0   \\
             \hline
            $\vec{b}^{\,2}$  &   5179/57600  & 	0  & 	7571/16695  & 	393/640  & 	-92097/339200  &  187/2100  & 1/40	\\
            \hline
            $\vec{b}^{\,3}$  & 0.159422044716717 & 0.000000000000009 & 0.310936711045800 & 0.444052776789396 & 0.307005319740028 & -0.230738637667449 & 0.009321785375499
    	\end{tabular}%
    	}%
    \end{adjustbox}
    \caption{DP(7,5): Fifth-order method $(A,\vec{b}^{\,1})$ with $(A,\vec{b}^{\,2})$ and $(A,\vec{b}^{\,3})$ as fourth-order embedded methods.}
    \label{tab:BT_dp54}
\end{table}
\section{Soliton Solutions} 
\label{app:soliton-solutions}
Different soliton solutions \cite{ranocha2020relaxation} of the KdV equation \eqref{KdV_Equation} are given below: 
\begin{enumerate}
    \item [(a)] \textbf{1-soliton solution}:
    \begin{align}
        u(x,t)= \beta_1 \sech ^2(\xi_1) \;,
    \end{align}
    where $\beta_1 = 1$ and $\xi_1 = \frac{\sqrt{\beta_1}(x-2 \beta_1 t)}{\sqrt{2}}$.
    \item [(b)] \textbf{2-soliton solution}:
    \begin{align}
        u(x, t) = -\frac{2(\beta_1-\beta_2)\left(\beta_2 \csch^2(\xi_2)+\beta_1 \sech ^2(\xi_1)\right)}{\left(\sqrt{2 \beta_1} \tanh(\xi_1)-\sqrt{2 \beta_2} \coth(\xi_2)\right)^2} \;,
    \end{align}
    where $\beta_1 = 0.5$, $\beta_2 = 1$, $\xi_1 = \frac{\sqrt{\beta_1}(x-2 \beta_1 t)}{\sqrt{2}}$, and $\xi_2 = \frac{\sqrt{\beta_2}(x-2 \beta_2 t)}{\sqrt{2}}$.
    \item [(c)] \textbf{3-soliton solution}:
    \begin{align}
        u(x, t) = \beta_1\sech^2(\xi_1)- \frac{2(\beta_2-\beta_3)\left(\frac{2(\beta_3-\beta_1)\left(\beta_3 \csch^2(\xi_3)-\beta_1 \sech ^2(\xi_1)\right)}{\left(\sqrt{2 \beta_3} \tanh(\xi_3)-\sqrt{2 \beta_1} \tanh(\xi_1)\right)^2}-\frac{2(\beta_1-\beta_2)\left(\beta_2 \csch^2(\xi_2)+\beta_1 \sech ^2(\xi_1)\right)}{\left(\sqrt{2 \beta_1} \tanh(\xi_1)-\sqrt{2 \beta_2} \coth(\xi_2)\right)^2} \right)}{\left(\frac{2(\beta_1-\beta_2)}{\sqrt{2 \beta_1} \tanh(\xi_1)-\sqrt{2 \beta_2} \coth(\xi_2)}-\frac{2(\beta_3-\beta_1)}{\sqrt{2 \beta_3} \tanh(\xi_3)-\sqrt{2 \beta_1} \coth(\xi_1)}\right)^2}\;,
    \end{align}
    where $\beta_1 = 0.4$, $\beta_2 = 0.7$, $\beta_3 = 1$, $\xi_1 = \frac{\sqrt{\beta_1}(x-2 \beta_1 t)}{\sqrt{2}}$, $\xi_2 = \frac{\sqrt{\beta_2}(x-2 \beta_2 t)}{\sqrt{2}}$, and $\xi_3 = \frac{\sqrt{\beta_3}(x-2 \beta_3 t)}{\sqrt{2}}$.
\end{enumerate}

\section*{Acknowledgments and Funding}
This work was supported by funding from the King Abdullah University of Science and Technology.
\section*{Data Availability}
The datasets and source code generated and analyzed during the current study are available in \cite{MRRK_code}.
\section*{Declarations}
On behalf of all authors, the corresponding author declares that they have no conflict of interest.

\vspace{1.5em}
\bibliographystyle{plain}
\bibliography{references}

\vspace{1.5em}
\end{document}